\pgfplotsset{compat=1.15}
\pgfplotsset{ticks = none}
\theoremstyle{plain}
\newtheorem{theorem}{Theorem}[section]
\newtheorem{definition}[theorem]{Definition}
\theoremstyle{definition}
\newcommand{\Rn}{\mathcal{R}_n}
\newcommand{\old}[1]{}
\newcommand{\distto}{%
	\mathrel{\vbox{\offinterlineskip\ialign{%
				\hfil##\hfil\cr
				$\scriptscriptstyle\mathrm{d}$\cr
				\noalign{\kern-.05ex}
				$\to$\cr
}}}}
\newcommand{\findimto}{%
	\mathrel{\vbox{\offinterlineskip\ialign{%
				\hfil##\hfil\cr
				$\scriptscriptstyle\mathrm{f.d.}$\cr
				\noalign{\kern-.05ex}
				$\to$\cr
}}}}
\newcommand{\Probto}{%
	\mathrel{\vbox{\offinterlineskip\ialign{%
				\hfil##\hfil\cr
				$\scriptscriptstyle\Prob$\cr
				\noalign{\kern-.05ex}
				$\to$\cr
}}}}
\newcommand{\TVto}{%
	\mathrel{\vbox{\offinterlineskip\ialign{%
				\hfil##\hfil\cr
				$\scriptscriptstyle\mathrm{TV}$\cr
				\noalign{\kern-.05ex}
				$\to$\cr
}}}}
\newcommand{\N}{\mathbb{N}}
\newcommand{\Z}{\mathbb{Z}}
\newcommand{\R}{\mathbb{R}}
\newcommand{\Prob}{\mathds{P}}
\newcommand{\eqdist}{%
	\mathrel{\vbox{\offinterlineskip\ialign{%
				\hfil##\hfil\cr
				$\scriptscriptstyle\mathrm{law}$\cr
				\noalign{\kern.2ex}
				$=$\cr
}}}}
\title{Abelian sandpiles on Sierpi\'nski gasket graphs}
\date{\today}
\author{Robin Kaiser, Ecaterina Sava-Huss, Yuwen Wang \footnote{The research of all three authors is supported by the Austrian Science Fund (FWF): P 34129}}
\begin{document}
\maketitle

\begin{abstract}
The aim of the current work is to investigate structural properties of the sandpile group of a special class of self-similar graphs. More precisely, we consider Abelian sandpiles on Sierpi\'nski gasket graphs and for the choice of normal boundary conditions, we give a characterization of the identity element and a recursive description of the sandpile group. Finally, we consider Abelian sandpile Markov chains on the aforementioned graphs and we improve the existing bounds on the speed of convergence to stationarity.
\end{abstract}

\textit{2020 Mathematics Subject Classification.} 05C81, 20K01, 31C20, 60J10.

\textit{Keywords}: sandpiles, Markov chains, random walks, critical configuration, mixing time, stationary distribution, Sierpi\'nski gasket, multiplicative harmonic function, normal boundary.
%\begin{small}
%\tableofcontents
%\end{small}

\section{Introduction}

Sandpiles, as models of self-organized criticality, were introduced on lattices by  Bak, Tang and Wiesenfeld \cite{bak-tang-wiesenfeld-87}, and have been intensively studied both in physics and mathematics since then. In \cite{dhar-1990}, Dhar investigated the Abelian group structure of the addition operators in this model, generalized it to arbitrary finite graphs, and called it Abelian sandpile model. He also gave an algorithmic one-to-one correspondence between recurrent configurations of the Abelian sandpile model and rooted spanning trees of the underlying graph; this is the so-called {\it burning algorithm}. On state spaces such as Euclidean lattices, there has been impressive progress in the last decades, and many of the conjectures and numerical simulations coming from physics have been proved/disproved, but still a  variety of open questions are lacking mathematical proofs; see for instance the excellent survey \cite{sandpile-jarai} and the references therein for recent developments and open questions.
In combinatorics, these models are also refered to as {\it chip firing games}, see \cite{chip-firing-book}.

On state spaces other than lattices, sandpile models didn't receive mathematically so much attention. For instance,  Abelian sandpiles on Sierpi\'nski gasket graphs have been considered by physicists for more than 20 years ago in \cite{waves-sandpile-daerden,crit-exp-daerden-1998,KutnjakUrbanc1996SandpileMO}, where several predictions and conjectures concerning the size of avalanches, size of waves, critical exponents, and other related quantities have been made. It has been predicted in the aforementioned papers that the Abelian sandpile model exhibits peculiar behaviour and log-periodic oscillations which are, as one would expect, related to the self-similar structure and the scaling invariance of the gasket. Thus, a rigorous understanding of the sandpile group and its structural properties, of the abelian sandpile Markov chain and its speed of convergence to stationarity on fractal graphs may bring us closer in approaching mathematically the physical findings. A first step in this direction has been taken in \cite{chen-sandpile-2020}, where the authors investigate the limit shape for Abelian sandpiles on Sierpi\'nski gasket graphs. As a consequence, the Sierpi\'nski gasket graph is known to be the first nontrivial state space (other than $\mathbb{Z}$) where four aggregation models of cluster growth (internal DLA \cite{idla-gasket}, rotor-router aggregation and abelian sandpile \cite{chen-sandpile-2020}, divisible sandpile \cite{div-sandpile-gasket}) have the same limit shape.
In  \cite{chen-sandpile-2020} several properties of the sandpile group were considerd, for particular choices of sink vertices; the authors have also asked for a full characterization and other properties of the sandpile group and its identity element. This is the purpose of the underlying note: to explore the self-similar structure of the gasket in order to get additional information on the identity element and to give a recursive characterization of the sandpile group. In addition, we slightly improve the existing bounds on the mixing time from \cite{abel-sand-mix-pike-levine-jerison} for Abelian sandpile Markov chains. 

\begin{figure}
    \centering
    \begin{subfigure}[t]{0.3\linewidth}
        \centering
        \resizebox{\linewidth}{!}{
        \begin{tikzpicture}[baseline=9ex]
        \node[shape=circle,draw=black] (A) at (0,0) {};
        \node[shape=circle,draw=black] (B) at (4,0) {};
        \node[shape=circle,draw=black] (E) at (2,1.73*2) {};
        
        \node[shape=circle,draw=none] (A1) at (-1,0) {};
        \node[shape=circle,draw=none] (A2) at (-1/2,-1.73/2) {};
        \node[shape=circle,draw=none] (B1) at (5,0) {};
        \node[shape=circle,draw=none] (B2) at (4+1/2,-1.73/2) {};
        \node[shape=circle,draw=none] (C1) at (3/2,1.73*2+1.73/2) {};
        \node[shape=circle,draw=none] (C2) at (5/2,1.73*2+1.73/2) {};
        
        \path [-] (A) edge node[left] {} (B);
        \path [-] (A) edge node[left] {} (E);
        \path [-] (B) edge node[left] {} (E);
        
        \path [-] (A) edge node[left] {} (A1);
        \path [-] (A) edge node[left] {} (A2);
        \path [-] (B) edge node[left] {} (B1);
        \path [-] (B) edge node[left] {} (B2);
        \path [-] (E) edge node[left] {} (C1);
        \path [-] (E) edge node[left] {} (C2);
        \end{tikzpicture}
        }
    \end{subfigure}
    \begin{subfigure}[t]{0.3\linewidth}
        \centering
        \resizebox{\linewidth}{!}{
        \begin{tikzpicture}[baseline=9ex]
        \node[shape=circle,draw=black] (A) at (0,0) {};
        \node[shape=circle,draw=black] (B) at (2,0) {};
        \node[shape=circle,draw=black] (C) at (4,0) {};
        \node[shape=circle,draw=black] (D) at (1,1.73) {};
        \node[shape=circle,draw=black] (E) at (3,1.73) {};
        \node[shape=circle,draw=black] (F) at (2,1.73*2) {} ;
        
        \node[shape=circle,draw=none] (A1) at (-1,0) {};
        \node[shape=circle,draw=none] (A2) at (-1/2,-1.73/2) {};
        \node[shape=circle,draw=none] (B1) at (5,0) {};
        \node[shape=circle,draw=none] (B2) at (4+1/2,-1.73/2) {};
        \node[shape=circle,draw=none] (C1) at (3/2,1.73*2+1.73/2) {};
        \node[shape=circle,draw=none] (C2) at (5/2,1.73*2+1.73/2) {};
        
        \path [-] (A) edge node[left] {} (B);
        \path [-] (A) edge node[left] {} (D);
        \path [-] (B) edge node[left] {} (D);
        \path [-] (B) edge node[left] {} (C);
        \path [-] (E) edge node[left] {} (B);
        \path [-] (E) edge node[left] {} (C);
        \path [-] (F) edge node[left] {} (E);
        \path [-] (F) edge node[left] {} (D);
        \path [-] (E) edge node[left] {} (D);
        
        \path [-] (A) edge node[left] {} (A1);
        \path [-] (A) edge node[left] {} (A2);
        \path [-] (C) edge node[left] {} (B1);
        \path [-] (C) edge node[left] {} (B2);
        \path [-] (F) edge node[left] {} (C1);
        \path [-] (F) edge node[left] {} (C2);
        \end{tikzpicture}}
    \end{subfigure}
    \begin{subfigure}[t]{0.3\linewidth}
        \centering
        \resizebox{\linewidth}{!}{
        \begin{tikzpicture}[baseline=9ex]
        \node[shape=circle,draw=black] (A) at (0,0) {};
        \node[shape=circle,draw=black] (B) at (2,0) {};
        \node[shape=circle,draw=black] (C) at (4,0) {};
        \node[shape=circle,draw=black] (D) at (1,1.73) {};
        \node[shape=circle,draw=black] (E) at (3,1.73) {};
        \node[shape=circle,draw=black] (F) at (2,1.73*2) {} ;
        \node[shape=circle,draw=black] (G) at (1,0) {};
        \node[shape=circle,draw=black] (H) at (3,0) {};
        \node[shape=circle,draw=black] (I) at (1/2,1.73/2) {};
        \node[shape=circle,draw=black] (J) at (3/2,1.73/2) {};
        \node[shape=circle,draw=black] (K) at (5/2,1.73/2) {};
        \node[shape=circle,draw=black] (L) at (7/2,1.73/2) {};
        \node[shape=circle,draw=black] (M) at (3/2,1.73/2+1.73) {};
        \node[shape=circle,draw=black] (N) at (5/2,1.73/2+1.73) {};
        \node[shape=circle,draw=black] (O) at (2,1.73) {};
        
        \node[shape=circle,draw=none] (A1) at (-1,0) {};
        \node[shape=circle,draw=none] (A2) at (-1/2,-1.73/2) {};
        \node[shape=circle,draw=none] (B1) at (5,0) {};
        \node[shape=circle,draw=none] (B2) at (4+1/2,-1.73/2) {};
        \node[shape=circle,draw=none] (C1) at (3/2,1.73*2+1.73/2) {};
        \node[shape=circle,draw=none] (C2) at (5/2,1.73*2+1.73/2) {};
        
        \path [-] (A) edge node[left] {} (G);
        \path [-] (B) edge node[left] {} (G);
        \path [-] (A) edge node[left] {} (I);
        \path [-] (D) edge node[left] {} (I);
        \path [-] (B) edge node[left] {} (J);
        \path [-] (D) edge node[left] {} (J);
        \path [-] (I) edge node[left] {} (J);
        \path [-] (I) edge node[left] {} (G);
        \path [-] (J) edge node[left] {} (G);
        
        \path [-] (B) edge node[left] {} (H);
        \path [-] (C) edge node[left] {} (H);
        \path [-] (E) edge node[left] {} (K);
        \path [-] (E) edge node[left] {} (L);
        \path [-] (B) edge node[left] {} (K);
        \path [-] (C) edge node[left] {} (L);
        \path [-] (H) edge node[left] {} (K);
        \path [-] (H) edge node[left] {} (L);
        \path [-] (L) edge node[left] {} (K);
        
        \path [-] (F) edge node[left] {} (M);
        \path [-] (F) edge node[left] {} (N);
        \path [-] (E) edge node[left] {} (N);
        \path [-] (E) edge node[left] {} (O);
        \path [-] (D) edge node[left] {} (O);
        \path [-] (D) edge node[left] {} (M);
        \path [-] (M) edge node[left] {} (N);
        \path [-] (O) edge node[left] {} (N);
        \path [-] (O) edge node[left] {} (M);
        
        \path [-] (A) edge node[left] {} (A1);
        \path [-] (A) edge node[left] {} (A2);
        \path [-] (C) edge node[left] {} (B1);
        \path [-] (C) edge node[left] {} (B2);
        \path [-] (F) edge node[left] {} (C1);
        \path [-] (F) edge node[left] {} (C2);
        \end{tikzpicture}}
    \end{subfigure}
    \caption{The first three iterations $G_0,G_1,G_2$ of the Sierpi\'nski gasket with normal boundary conditions.}
    \label{fig:sierp_constr}
\end{figure}
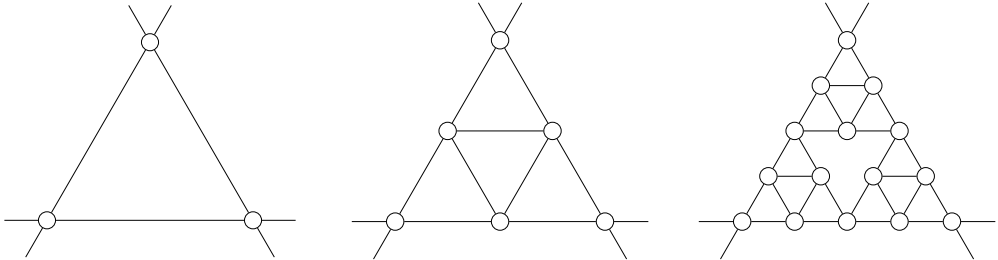

The paper is structured as following. In Section \ref{sec:prelim} we introduce the state spaces and the Abelian sandpile models. Then we fix for the rest of the paper the state spaces $G_n$, for $n\in\N$ as the $n$-th level prefractal  Sierpi\'nski gasket graph, in which each of the three corner vertices is joined by two edges with an additional vertex $s$ called the sink, and we refer to this construction as {\it $G_n$ with normal boundary conditions}; see Figure \ref{fig:sierp_constr}. Section \ref{sec:results} is dedicated to the main results of this paper, which we briefly summarize here:
\begin{itemize}
\item in Theorem \ref{thm:id} we describe recursively the identity element $\mathsf{id}_n$ of $G_n$ with normal boundary conditions, by matching three recurrent configurations on $G_{n-1}$ in a rotated fashion, and setting a fixed value at the junction points.
\item in Theorem \ref{thm:sand-group} we give a characterization of the sandpile group of $G_n$ as a direct sum of three normal subgroups of the sandpile group of $G_{n-1}$, corresponding to the three subcopies of $G_{n-1}$ that build $G_n$.
\item in Theorem \ref{thm:mix-time} we consider Abelian sandpile Markov chains which are random walks on the finite Abelian group of critical sandpile configurations of $G_n$. For such random walks, we give bounds for the speed of convergence to stationarity, that is, we show that the order of the mixing time is $|V_n|\log|V_n|$, where $V_n$ represents the set of vertices of $G_n$.
\end{itemize}

\section{Preliminaries}\label{sec:prelim}

This section is devoted to introducing the notation and preliminaries. We start by defining Abelian sandpile models and the  underlying state spaces for them: Sierpi\'nski gasket graphs.

\subsection{Abelian sandpiles on finite graphs}

Let $G=(V\cup\{s\},E)$ be a finite, connected, simple graph, with vertex set $V\cup\{s\}$, edge set $E$, with $|V|=N$, $N\in\mathbb{N}$ and with a designated vertex $s$ called the {\it sink}. For a
vertex $v\in V$, we denote by $d_v$ the degree of $v$, that is, the number of neighbours $w\in V$ of $v$ in the graph $G$.
Sometimes, it will be useful to fix an ordering of the vertices $V=\{v_1,\ldots,v_N\}$. For vertices $v,w\in V$, we write both $x\sim y$ and $(x,y)\in E$ to denote that $x$ and $y$ are connected by an edge, so $\sim$ denotes the neighbourhood relation in $G$. 

A {\it sandpile configuration} or simply a {\it sandpile} is a function $\eta:V\to\mathbb{N}$ from the nonsink vertices to the nonnegative integers. So $\eta\in \mathbb{Z}^V$ can be seen as a $N$-dimensional integer valued vector indexed over the non-sink vertices, and $\eta(v)$ represents the number of chips (or sand particles) sitting in $v$. The sandpile $\eta$ is called {\it stable} if $\eta(v)<d_v$ for every $v\in V$. Otherwise, it is called {\it unstable} and it may be stabilized by {\it toppling or firing} vertices.  A vertex topples by sending one chip to each of the neighbours, and this results in a new configuration $\eta'$ where the entry corresponding to $v$ has decreased by $d_v$, and $d_v$ entries corresponding to neighbors of $v$ have increased by one. Toppling $v$ may cause other vertices to become unstable and this may further lead to other topplings; any chip that falls into the sink is gone forever so the sink may be regarded as the vertex collecting the excess mass. The assumption that $G$ is connected, together with the existence of a sink that collects excess mass, ensures that starting with any initial sandpile configuration, we can reach a final stable configuration in finitely many steps by successive firings at vertices that are unstable. One can show that starting with an unstable sandpile configuration $\eta$ on $G$, the corresponding final configuration that we denote $\eta^{\circ}$ does not depend on the order in which the topplings are performed, and hence the terminology of Abelian sandpile \cite{dhar-1990}. Topplings are encoded in the {\it graph Laplacian $\overline{\Delta}$ of $G$} defined as the $(N+1)\times(N+1)$ matrix indexed over the vertices of $G$, with entries given by: 
\begin{align*}
    \overline{\Delta}(v,w)=
    \begin{cases}
    d_v&,v=w\\
    -1&,v\sim w\\
    0&,\text{else}
    \end{cases}.
\end{align*}
Denote by $\Delta$ the {\it reduced graph Laplacian} of $G$, which is a $N\times N$ matrix obtained from $\overline{\Delta}$ by deleting the row and column corresponding to the sink $s$. Notice that in $\overline{\Delta}$ all rows sums are zero, so $\overline{\Delta}$ is not invertible, but $\Delta$ is invertible.
In terms of $\Delta$, toppling the configuration $\eta$ at vertex $v$ results in a configuration $\eta'=\eta-\Delta \delta_v$,
where $\delta_v$ is the configuration in $\mathbb{Z}^V$ with $1$ at position corresponding to $v$, and all other entries are $0$. We define the sum of two sandpile configurations by $(\eta+\sigma)(v)=\eta(v)+\sigma(v)$ and we denote the set of stable configurations of $G$ by $\mathsf{Stable}(G)$. Moreover, we define the binary operation  $''\oplus''$ of addition of two configurations followed by stabilization by:
$$\eta\oplus \sigma=(\eta+\sigma)^{\circ}.$$
Then $\left(\mathsf{Stable}(G),\oplus\right)$ is a commutative monoid with identity being the all zero configuration.
\vspace{-0.3cm}
\paragraph{Sandpile Markov chains.} Everything so far was deterministic, but one can add randomness to the system by performing a random walk on the set $\mathsf{Stable}(G)$ of stable sandpile configurations of $G$. For this, we choose $\mu$ to be any probability measure on $V$. Given an initial state $\eta_0\in\mathsf{Stable}(G)$, pick a vertex $v\in V$ according to the  distribution $\mu$, add a particle at $v$ and stabilize: that is $\eta_0$ transitions to $\eta_1$
$$\eta_0\mapsto\eta_1=\eta_0\oplus \delta_v=(\eta_0+\delta_v)^{\circ},\quad \text{with probability }\mu(v).$$ Proceeding in this way, we obtain a Markov chain $(\eta_t)_{t\in\mathbb{N}}$ with state space $\mathsf{Stable}(G)$,
defined as: for any $t\geq 1$
$$\eta_t=\eta_{t-1}\oplus \delta_{X_{t}},$$
where $(X_t)_{t\geq 1}$ is a sequence of iid random variables distributed according to $\mu$. The choice of  $\mu$ during this work is the uniform distribution over $V$, i.e. at each step we choose one vertex uniformly at random, add a chip there and stabilize this configuration. The Markov chain $(\eta_t)_{t\in\N}$ over the stable configurations of $G$ is called {\it the Abelian sandpile Markov chain}.
\vspace{-0.3cm}
\paragraph{Recurrent configurations and the sandpile group.}
We recall that for a Markov chain a state $\eta$ is called recurrent if, starting from $\eta$, the Markov chain returns to $\eta$ with probability one. Otherwise, the state is called transient. It is known, see \cite{sandpile-jarai} for a survey on Abelian sandpiles, \cite{dhar-1990}, and \cite{levine-survey}, that the {\it Abelian sandpile Markov chain} has exactly one recurrent communicating class that we denote $\mathcal{R}(G)$ and a configuration $\eta\in\mathcal{R}(G)$ if and only if it can be reached from the {\it saturated or maximal configuration} $\eta^{\max}$ defined as $\eta^{\max}(v)=d_v-1$, for all $v\in V$. So one can represent the set of recurrent configurations over $G$ by
$$\mathcal{R}(G)=\left\{\eta\in\mathsf{Stable(G)}:\ \eta=\eta^{\max}\oplus\sigma,\text{ for some }\sigma\in\mathsf{Stable(G)}\right\}.$$ The set $(\mathcal{R}(G),\oplus)$ is called {\it the sandpile group} and its elements are also called critical configurations. One can easily check that $(\mathcal{R}(G),\oplus)$ is a nonempty abelian group and it is the minimal ideal of $\mathsf{Stable}(G)$. We denote by $\mathsf{id}$ the identity in $\mathcal{R}(G)$. Even computing the identity element of this group for a specific graph may be very involved, and another characterization (isomorphism) of $\mathcal{R}(G)$ turns out to be useful in many cases. 

We recall here two other possible characterizations of the sandpile group. Let $\Delta\mathbb{Z}^V$ be the integer row span of the reduced Laplacian $\Delta$ of $G$, which is a subgroup of the Abelian group $\mathbb{Z}^V$. We define an equivalence relation $"\sim"$ on $\mathbb{Z}^V$ as following: two configurations $\eta,\eta'\in \mathbb{Z}^V$ are $"\sim"$ -equivalent if $\eta-\eta'\in \Delta\mathbb{Z}^V$, i.e. one configuration can be obtained from another one by successive topplings. The equivalence classes under $"\sim"$ form an Abelian group, the factor group
$$\Gamma:=\mathbb{Z}^V / \Delta\mathbb{Z}^V.$$
It is known \cite{dhar-1990} that every equivalence class in $\Gamma$ contains precisely one recurrent sandpile configuration, that is, we have $\mathcal{R}(G)\cong \Gamma$ and  an isomorphism can be given by
$\eta\mapsto (\eta(v_1),\ldots\eta(v_{|V|}))+\Delta\mathbb{Z}^V$. In particular $|\mathcal{R}(G)|=|\Gamma|=det(\Delta)$,
where $det(\Delta)$ is the number of spanning trees of $G$ by the matrix-tree theorem.
\vspace{-0.3cm}
\paragraph{Burning algorithm} called also {\it Dhar's multiplication by identity test} \cite{dhar-1990} checks whether a configuration is recurrent or not. For any vertex $v\in V$, denote by $\beta(v)$ the number of edges in $G$ that connect $v$ to the sink $s$. The burning algorithm states that a sandpile configuration $\eta$ is recurrent if and only if adding $\beta(v)$ chips at each vertex $v$ causes every vertex to topple exactly once and after stabilization the same configuration $\eta$ is returned:
$$\eta\oplus \sum_{v\in V}\beta(v)\delta_v=\eta.$$
We write $[\eta]$ for the equivalence class containing $\eta$.
The burning algorithm has been applied to junction points on the gasket in \cite{chen-sandpile-2020}, in order to produce self-similar sandpile tiles. 

\vspace{-0.3cm}
\paragraph{Stationary distribution.}
Consider now the sandpile Markov chain $(\eta_n)_{n\in\N}$ whose recurrent states are $\mathcal{R}(G)\cong \Gamma=\mathbb{Z}^V / \Delta\mathbb{Z}^V$. Since the transient states will be visited only finitely many times and the Markov chain will end up in $\mathcal{R}(G)$, it makes sense to start the chain directly in a recurrent state. Then the process of adding to a recurrent configuration $\eta$ a chip at a vertex chosen uniformly at random 
can be represented as a random walk on the group $\mathcal{R}(G)$ driven by the uniform distribution on the set $S=\{\delta_v\oplus\mathsf{id} \}_{v\in V\cup\{s\}}$, where as above $\delta_v$ is the sandpile configuration on $G$ with one chip at $v$ and zero elsewhere. So $\mathcal{R}(G)$ is an abelian group generated by $S$, and in conclusion the random walk driven by the uniform distribution over $S$ is irreducible, has an unique stationary distribution $\pi=\mathsf{Unif}(\mathcal{R}(G))$ which is the uniform distribution over $\mathcal{R}(G)$; see \cite{saloff-coste-survey} for more details on this matter. In view of the isomorphism $\mathcal{R}(G)\cong \Gamma=\mathbb{Z}^V / \Delta\mathbb{Z}^V$ one can actually view the random walk $(\mathcal{R}(G),\mathsf{Unif}(S))$ as the random walk on $\Gamma$ driven by the uniform distribution on $\{\mathsf{e_1},\ldots\mathsf{e}_N,\mathsf{0}\}$ where $\mathsf{e}_i$ is the standard basis vector in $\mathbb{R}^N$, for $i=1,\ldots,N$ and $\mathsf{0}$ is the zero configuration. 

\vspace{-0.3cm}
\paragraph{Multiplicative harmonic functions.} Because one can regard the sandpile Markov chain as a random walk on the group $\Gamma=\Z^V\slash \Delta\Z^V$, one can compute the eigenvalues and the eigenfunctions of the transition matrix $\mathsf{P}$ in terms of the characters of $\Gamma$. See \cite{abel-sand-mix-pike-levine-jerison} for an exposition in this direction. The characters of $\Gamma$ are indexed by the {\it multiplicative harmonic functions of $G$}, which are functions $h:V\cup\{s\}\to \mathbb{T}$, where $\mathbb{T}$ represents the unit circle, that satisfy $h(s)=1$ and the {\it geometric mean value property}:
for all $v\in V$
\begin{equation}\label{eq:def-mult-harm}
h(v)^{d_v}=\prod_{w\sim v}h(w).
\end{equation}
If $\mathcal{H}$ denotes the set of multiplicative harmonic functions of $G$, then it is shown in \cite{abel-sand-mix-pike-levine-jerison} that $\mathcal{H}\cong \Gamma$, an isomorhism being given by $h\mapsto \chi_h$ and $\chi_h:\Gamma\to\mathbb{T}$ with $\chi_h(\mathsf{x}+\Delta\Z^V)=\prod_{v\in V}h(v)^{\mathsf{x}_v}$; here $\mathsf{x}_v$ represents the entry of the vector $\mathsf{x}$ corresponding to the vertex $v$. Thus another way to understand the behaviour of the sandpile Markov chain is through its multiplicative harmonic functions. In particular, by  \cite[Theorem 2.6]{abel-sand-mix-pike-levine-jerison} the characters of $\Gamma$ are the functions\\ $\{\chi_h:\mathcal{R}(G)\to\mathbb{T}\}_{h\in \mathcal{H}}$ defined as
\begin{equation}\label{eq:char-gamma}
\chi_h(\eta)=\prod_{v\in V}h(v)^{\eta(v)}
\end{equation}
and they represent an orthonormal basis of eigenfunctions for the transition matrix $\mathsf{P}$ of the sandpile chain. The eigenvalue associated with $\chi_h$ is 
$$\lambda_h=\frac{1}{|V|+1}\left(\sum_{v\in V}h(v)+1\right).$$ We refer once again the reader to \cite{abel-sand-mix-pike-levine-jerison}  for a beautiful exposition on multiplicative harmonic functions and their relation to the sandpile Markov chain, in particular, on how to use the properties of such functions in order to bound the speed of convergence to stationarity of the sandpile Markov chain. 

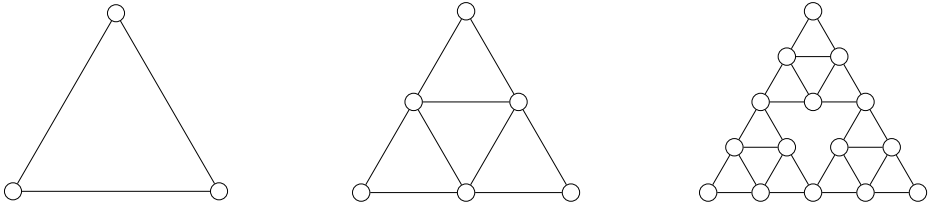
\begin{figure}
    \centering
    \begin{subfigure}[t]{0.3\linewidth}
        \centering
        \resizebox{\linewidth}{!}{
        \begin{tikzpicture}[baseline=9ex]
        \node[shape=circle,draw=black] (A) at (0,0) {};
        \node[shape=circle,draw=black] (B) at (4,0) {};
        \node[shape=circle,draw=black] (E) at (2,1.73*2) {};
        
        \node[shape=circle,draw=none] (A1) at (-1,0) {};
        \node[shape=circle,draw=none] (A2) at (-1/2,-1.73/2) {};
        \node[shape=circle,draw=none] (B1) at (5,0) {};
        \node[shape=circle,draw=none] (B2) at (4+1/2,-1.73/2) {};
        \node[shape=circle,draw=none] (C1) at (3/2,1.73*2+1.73/2) {};
        \node[shape=circle,draw=none] (C2) at (5/2,1.73*2+1.73/2) {};
        
        \path [-] (A) edge node[left] {} (B);
        \path [-] (A) edge node[left] {} (E);
        \path [-] (B) edge node[left] {} (E);
        \end{tikzpicture}
        }
    \end{subfigure}
    \begin{subfigure}[t]{0.3\linewidth}
        \centering
        \resizebox{\linewidth}{!}{
        \begin{tikzpicture}[baseline=9ex]
        \node[shape=circle,draw=black] (A) at (0,0) {};
        \node[shape=circle,draw=black] (B) at (2,0) {};
        \node[shape=circle,draw=black] (C) at (4,0) {};
        \node[shape=circle,draw=black] (D) at (1,1.73) {};
        \node[shape=circle,draw=black] (E) at (3,1.73) {};
        \node[shape=circle,draw=black] (F) at (2,1.73*2) {} ;
        
        \node[shape=circle,draw=none] (A1) at (-1,0) {};
        \node[shape=circle,draw=none] (A2) at (-1/2,-1.73/2) {};
        \node[shape=circle,draw=none] (B1) at (5,0) {};
        \node[shape=circle,draw=none] (B2) at (4+1/2,-1.73/2) {};
        \node[shape=circle,draw=none] (C1) at (3/2,1.73*2+1.73/2) {};
        \node[shape=circle,draw=none] (C2) at (5/2,1.73*2+1.73/2) {};
        
        \path [-] (A) edge node[left] {} (B);
        \path [-] (A) edge node[left] {} (D);
        \path [-] (B) edge node[left] {} (D);
        \path [-] (B) edge node[left] {} (C);
        \path [-] (E) edge node[left] {} (B);
        \path [-] (E) edge node[left] {} (C);
        \path [-] (F) edge node[left] {} (E);
        \path [-] (F) edge node[left] {} (D);
        \path [-] (E) edge node[left] {} (D);
        \end{tikzpicture}}
    \end{subfigure}
    \begin{subfigure}[t]{0.3\linewidth}
        \centering
        \resizebox{\linewidth}{!}{
        \begin{tikzpicture}[baseline=9ex]
        \node[shape=circle,draw=black] (A) at (0,0) {};
        \node[shape=circle,draw=black] (B) at (2,0) {};
        \node[shape=circle,draw=black] (C) at (4,0) {};
        \node[shape=circle,draw=black] (D) at (1,1.73) {};
        \node[shape=circle,draw=black] (E) at (3,1.73) {};
        \node[shape=circle,draw=black] (F) at (2,1.73*2) {} ;
        \node[shape=circle,draw=black] (G) at (1,0) {};
        \node[shape=circle,draw=black] (H) at (3,0) {};
        \node[shape=circle,draw=black] (I) at (1/2,1.73/2) {};
        \node[shape=circle,draw=black] (J) at (3/2,1.73/2) {};
        \node[shape=circle,draw=black] (K) at (5/2,1.73/2) {};
        \node[shape=circle,draw=black] (L) at (7/2,1.73/2) {};
        \node[shape=circle,draw=black] (M) at (3/2,1.73/2+1.73) {};
        \node[shape=circle,draw=black] (N) at (5/2,1.73/2+1.73) {};
        \node[shape=circle,draw=black] (O) at (2,1.73) {};
        
        \node[shape=circle,draw=none] (A1) at (-1,0) {};
        \node[shape=circle,draw=none] (A2) at (-1/2,-1.73/2) {};
        \node[shape=circle,draw=none] (B1) at (5,0) {};
        \node[shape=circle,draw=none] (B2) at (4+1/2,-1.73/2) {};
        \node[shape=circle,draw=none] (C1) at (3/2,1.73*2+1.73/2) {};
        \node[shape=circle,draw=none] (C2) at (5/2,1.73*2+1.73/2) {};
        
        \path [-] (A) edge node[left] {} (G);
        \path [-] (B) edge node[left] {} (G);
        \path [-] (A) edge node[left] {} (I);
        \path [-] (D) edge node[left] {} (I);
        \path [-] (B) edge node[left] {} (J);
        \path [-] (D) edge node[left] {} (J);
        \path [-] (I) edge node[left] {} (J);
        \path [-] (I) edge node[left] {} (G);
        \path [-] (J) edge node[left] {} (G);
        
        \path [-] (B) edge node[left] {} (H);
        \path [-] (C) edge node[left] {} (H);
        \path [-] (E) edge node[left] {} (K);
        \path [-] (E) edge node[left] {} (L);
        \path [-] (B) edge node[left] {} (K);
        \path [-] (C) edge node[left] {} (L);
        \path [-] (H) edge node[left] {} (K);
        \path [-] (H) edge node[left] {} (L);
        \path [-] (L) edge node[left] {} (K);
        
        \path [-] (F) edge node[left] {} (M);
        \path [-] (F) edge node[left] {} (N);
        \path [-] (E) edge node[left] {} (N);
        \path [-] (E) edge node[left] {} (O);
        \path [-] (D) edge node[left] {} (O);
        \path [-] (D) edge node[left] {} (M);
        \path [-] (M) edge node[left] {} (N);
        \path [-] (O) edge node[left] {} (N);
        \path [-] (O) edge node[left] {} (M);
        \end{tikzpicture}}
    \end{subfigure}
    \caption{The graphs $\mathcal{G}_0$, $\mathcal{G}_1$ and $\mathcal{G}_2$.}
    \label{fig:sierp_constr_1}
\end{figure}
\vspace{-0.3cm}
\paragraph{Mixing time.}Once we know that the Abelian sandpile Markov chains on $\mathcal{R}(G)$ converges to the uniform distribution, it is natural to ask  about the speed of convergence, i.e. its mixing time; see \cite{abel-sand-mix-pike-levine-jerison} for bounds on mixing time for sandpile Markov chains on finite graphs. 

For two measures $\mu$ and $\nu$ on $\mathcal{R}(G)$, the {\it $\mathcal{L}^2$ distance} between them is defined as
$$\|\mu-\nu\|_2=\Big(\sum_{\eta\in\mathcal{R}(G)}|\mu(\eta)-\nu(\eta)|^2\Big)^{1/2},$$
the {\it total variation distance} as
$$\|\mu-\nu\|_{\mathsf{TV}}=\frac{1}{2}\sum_{\eta\in\mathcal{R}(G)}|\mu(\eta)-\nu(\eta)|=\sup_{R\subset \mathcal{R}(G)}|\mu(R)-\nu(R)|$$
and Cauchy-Schwarz inequality gives $\|\mu-\nu\|_{\mathsf{TV}}\leq \frac{1}{2}\|\mu-\nu\|_2$.
For a random walk on a group, the distance to stationarity of the distribution at time $t$ is independent of the initial state, so for the sandpile Markov chain $(\eta_t)_{t\in \N}$ with transition matrix $\mathsf{P}$ over $\mathcal{R}(G)$, we can assume it starts from a deterministic state, for instance from $\mathsf{id}$. The {\it mixing time} of the sandpile chain is defined as: for any $\varepsilon>0$
$$t_{\mathsf{mix}}(\varepsilon)=\min\{t\in \N:\ \|\mathsf{P}^t\delta_{\mathsf{id}}-\pi\|_{\mathsf{TV}}\leq \varepsilon\}$$
where $\pi$ is the stationary distribution over $\mathcal{R}(G)$, which is the uniform distribution. That is, the mixing time  
is the first time when the total variation distance between the distribution of the sandpile chain at time $t$ and the stationary distribution drops below $\varepsilon$. It is standard to take $\varepsilon=\frac{1}{4}$, and in this case we write only $t_{\mathsf{mix}}$ instead of $t_{\mathsf{mix}}(1/4)$.
There are several methods to obtain bounds on the mixing times for Markov chains; see 
\cite{peres-mixing-book} for a variety of approaches and methods. In particular, understanding the spectral properties of the transition matrix $\mathsf{P}$, gives us information on the speed of convergence to stationarity. For the matrix $\mathsf{P}$ with largest eigenvalue one, we denote by $\lambda^{\star}$ the size of the second largest eigenvalue, and by $\gamma^{\star}=1-\lambda^{\star}$ the spectral gap of $\mathsf{P}$. In terms of multiplicative harmonic functions, $\lambda^{\star}=\max\{h\in\mathcal{H}\backslash\{1\}:\ |\lambda_h|\}$. The relaxation time is denoted by $t_{\mathsf{rel}}=1/\gamma^{\star}$.

\subsection{The Sierpi\'nski gasket graph}

\begin{figure}
    \centering
    \begin{subfigure}[t]{0.2\linewidth}
        \centering
        \begin{tikzpicture}[baseline=9ex]
        \node[shape=circle,draw=black] (A) at (0,0) {};
        \node[shape=circle,draw=black] (C) at (1,0) {};
        \node[shape=circle,draw=black] (F) at (0.5,1.73/2) {} ;
        
        \path [-] (A) edge node[left] {} (C);
        \path [-] (A) edge node[left] {} (F);
        \path [-] (F) edge node[left] {} (C);
        \end{tikzpicture}
        \end{subfigure}
        \begin{subfigure}[t]{0.3\linewidth}
        \centering
       
         \begin{tikzpicture}[baseline=9ex]
        \node[shape=circle,draw=black] (A) at (0,0) {};
        \node[shape=circle,draw=black] (B) at (1,0) {};
        \node[shape=circle,draw=black] (C) at (2,0) {};
        \node[shape=circle,draw=black] (D) at (0.5,1.73/2) {};
        \node[shape=circle,draw=black] (E) at (1.5,1.73/2) {};
        \node[shape=circle,draw=black] (F) at (1,1.73) {} ;
        
        \path [-] (A) edge node[left] {} (B);
        \path [-] (A) edge node[left] {} (D);
        \path [-] (B) edge node[left] {} (D);
        \path [-] (B) edge node[left] {} (C);
        \path [-] (E) edge node[left] {} (B);
        \path [-] (E) edge node[left] {} (C);
        \path [-] (F) edge node[left] {} (E);
        \path [-] (F) edge node[left] {} (D);
        \path [-] (E) edge node[left] {} (D);
        \end{tikzpicture}
        \end{subfigure}
        \begin{subfigure}[t]{0.4\linewidth}
        \centering
        \begin{tikzpicture}[baseline=9ex]
        \node[shape=circle,draw=black] (A) at (0,0) {};
        \node[shape=circle,draw=black] (B) at (2,0) {};
        \node[shape=circle,draw=black] (C) at (4,0) {};
        \node[shape=circle,draw=black] (D) at (1,1.73) {};
        \node[shape=circle,draw=black] (E) at (3,1.73) {};
        \node[shape=circle,draw=black] (F) at (2,1.73*2) {} ;
        \node[shape=circle,draw=black] (G) at (1,0) {};
        \node[shape=circle,draw=black] (H) at (3,0) {};
        \node[shape=circle,draw=black] (I) at (1/2,1.73/2) {};
        \node[shape=circle,draw=black] (J) at (3/2,1.73/2) {};
        \node[shape=circle,draw=black] (K) at (5/2,1.73/2) {};
        \node[shape=circle,draw=black] (L) at (7/2,1.73/2) {};
        \node[shape=circle,draw=black] (M) at (3/2,1.73/2+1.73) {};
        \node[shape=circle,draw=black] (N) at (5/2,1.73/2+1.73) {};
        \node[shape=circle,draw=black] (O) at (2,1.73) {};
        
        \node[shape=circle,draw=none] (A1) at (-1,0) {};
        \node[shape=circle,draw=none] (A2) at (-1/2,-1.73/2) {};
        \node[shape=circle,draw=none] (B1) at (5,0) {};
        \node[shape=circle,draw=none] (B2) at (4+1/2,-1.73/2) {};
        \node[shape=circle,draw=none] (C1) at (3/2,1.73*2+1.73/2) {};
        \node[shape=circle,draw=none] (C2) at (5/2,1.73*2+1.73/2) {};
        
        \path [-] (A) edge node[left] {} (G);
        \path [-] (B) edge node[left] {} (G);
        \path [-] (A) edge node[left] {} (I);
        \path [-] (D) edge node[left] {} (I);
        \path [-] (B) edge node[left] {} (J);
        \path [-] (D) edge node[left] {} (J);
        \path [-] (I) edge node[left] {} (J);
        \path [-] (I) edge node[left] {} (G);
        \path [-] (J) edge node[left] {} (G);
        
        \path [-] (B) edge node[left] {} (H);
        \path [-] (C) edge node[left] {} (H);
        \path [-] (E) edge node[left] {} (K);
        \path [-] (E) edge node[left] {} (L);
        \path [-] (B) edge node[left] {} (K);
        \path [-] (C) edge node[left] {} (L);
        \path [-] (H) edge node[left] {} (K);
        \path [-] (H) edge node[left] {} (L);
        \path [-] (L) edge node[left] {} (K);
        
        \path [-] (F) edge node[left] {} (M);
        \path [-] (F) edge node[left] {} (N);
        \path [-] (E) edge node[left] {} (N);
        \path [-] (E) edge node[left] {} (O);
        \path [-] (D) edge node[left] {} (O);
        \path [-] (D) edge node[left] {} (M);
        \path [-] (M) edge node[left] {} (N);
        \path [-] (O) edge node[left] {} (N);
        \path [-] (O) edge node[left] {} (M);
        \end{tikzpicture}
    \end{subfigure}
    \caption{The graphs $G_0$, $G_1$, and $G_2$.}
    \label{fig:sierp_constr-1}
\end{figure}
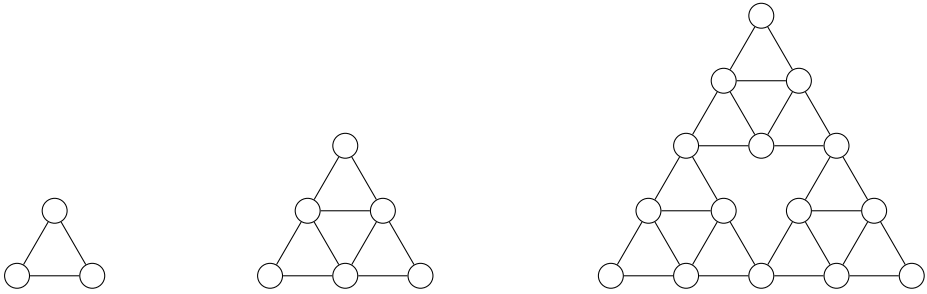
We finally introduce Sierpi\'nski gasket graphs and the associated pre-fractal graphs. The Sierpi\'nski gasket $K$ can be defined formally by the following three similitudes $\psi_i:\mathbb{R}^2\to\mathbb{R}^2$, $i\in\{1,2,3\}$:
$$\psi_i(x)=\frac12(x-u_i)+u_i,$$
where $u_0=(0,0)$, $u_1=(1,0)$ and $u_2=\frac12(1,\sqrt{3})$ are the vertices of a unit equilateral triagle in $\R^2$. Let $\mathcal{G}_0$ be the complete graph over the vertex set $V_0=\{u_0,u_1,u_2\}$. The Sierpi\'nski gasket fractal $K$ is the unique nonempty compact set $K$ such that $K=\cup_{i=1}^3\psi_i(K)$,
whose discrete time approximations are constructed inductively as follows. With $\mathcal{G}_0$ as above, for every $n\geq 1$, we define the associated level-$n$ prefractal graph $\mathcal{G}_n$ as $\mathcal{G}_n:=\cup_{i=1}^3\psi_i(\mathcal{G}_{n-1})$. See Figure \ref{fig:sierp_constr_1} for a graphical representation. In order to make all edges have length one, for any $n\geq 0$ we consider
$G_n:=2^n\mathcal{G}_n,$
where for any $x>0$ and $S\subset \R^2$, $xS:=\{xs:\ s\in S\}$; see Figure \ref{fig:sierp_constr-1}. The one-sided infinite Sierpi\'nski gasket graph $G$ is then defined as the graph $G=\cup_{n=0}^{\infty}G_n$ and the double-sided gasket is defined as $G\cup \mathsf{Refl}(G)$, where $\mathsf{Refl}(G)$ is the reflection of $G$ around  the $y$-axis.

During this work we consider the finite graphs $G_n$, and notice that $G_{n+1}$ is an amalgam of three copies of $G_n$, or more generally 
each $G_n$ is an amalgam of $3^{n-k}$ copies of $G_k$ ($0\leq k\leq n$). The self-similar structure and the fact that three copies of $G_{n-1}$ are matched in three points in order to construct $G_n$ , allows one to solve many problems exactly. For instance, it is known that the number of vertices $|V_n|$ and edges $|E_n|$ of $G_n=(V_n,E_n)$ is given by
$$|V_n|=\frac32(3^n+1)\quad\text{and} \quad |E_n|=3^{n+1}.$$
Also, the number $\tau(G_n)$ of spanning trees of $G_n$ is precisely known and given by the formula
\begin{equation}\label{eq:span-trees-leveln}
\tau(G_n)=\left(\frac{3}{20}\right)^{1/4}\cdot \left(\frac35\right)^{n/2}\cdot 540^{3^n/4}=\left(\frac{5}{12}\right)^{1/4}|E_n|^{(1-2/d_s)/2}\cdot \sqrt[12]{540}^{|E_n|},
\end{equation} 
where $d_s$ is the fractal dimension of the Sierpi\'nski gasket and is given by
$d_s=2\frac{\log 3}{\log 5}\approx 0.86$. This formula has been obtained by different methods in several works; see \cite{teufl-wagner-2011} for more details.
So 
\begin{equation}\label{eq:sp-tr-rec}
\tau(G_{n+1})=\tau(G_n) \cdot 18\cdot 540^{(3^n-1)/2}.
\end{equation}

%================================================================================0=

\section{Main results}\label{sec:results}

\begin{figure}
    \centering
    \begin{subfigure}[t]{0.22\textwidth}
        \includegraphics[width=\linewidth]{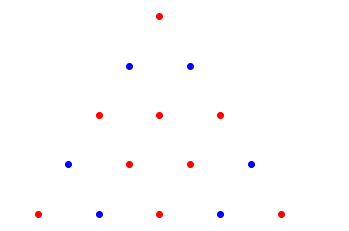}
        \caption{$\mathsf{id}_2$}
    \end{subfigure}
    \begin{subfigure}[t]{0.25\textwidth}
        \includegraphics[width=\linewidth]{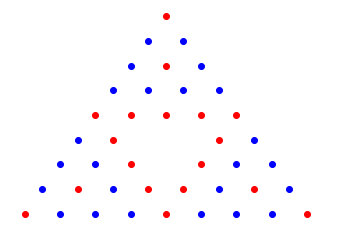}
        \caption{$\mathsf{id}_3$}
    \end{subfigure}
    \begin{subfigure}[t]{0.25\textwidth}
        \includegraphics[width=\linewidth]{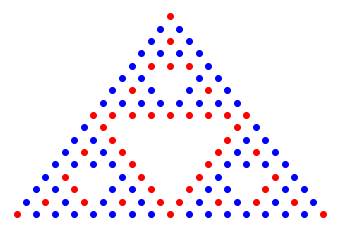}
        \caption{$\mathsf{id}_4$}
    \end{subfigure}
    \begin{subfigure}[t]{0.25\textwidth}
        \includegraphics[width=\linewidth]{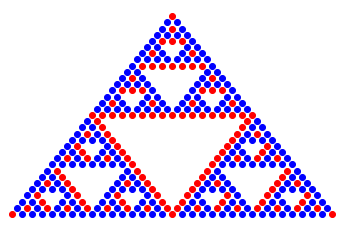}
        \caption{$\mathsf{id}_5$}
    \end{subfigure}
    \caption{The identity element on the first four levels of the Sierpi\'nski gasket. Red dots correspond to vertices with $2$ chips, whereas blue dots correspond to vertices with $3$ chips.}
    \label{fig:identity}
\end{figure}

\subsection{Identity element with normal boundary conditions}

When working with sandpiles, two types of boundary conditions have been considered in the literature:
\vspace{-0.3cm}
\begin{itemize}
\setlength\itemsep{0em}
\item {\it Normal boundary conditions} as in the physics community \cite{waves-sandpile-daerden,crit-exp-daerden-1998} where, each of the three boundary corners of the gasket are connected to a sink vertex $s$ by two additional edges as in Figure \ref{fig:sierp_constr}, so the sink has degree six. In this way, the whole gasket $G_n$, $n\in\mathbb{N}$ contains only non-sink vertices, all with degree four. The advantage in using normal boundary conditions is that all three corner vertices "look" the same, so we do not have to distinguish between vertices  with different degrees and the scaling-invariance and self-similarity can be fully explored.
\item {\it Sinked boundary} where one or more vertices of the underlying graph are collapsed to form a single sink vertex, without collapsing edges.
\end{itemize}
\vspace{-0.3cm}
In \cite{stricharz-sandpile}, for normal boundary conditions the identity element of the sandpile group of the 
Sierpi\'nski gasket cell graphs (or tower of Hanoi graphs) was characterized.  Sierpi\'nski gasket cell graphs represent another class of finite self-similar graphs that can be used to approximate the fractal $K$. Since in obtaining iteration $n$
of such graph, one takes three copies of the previous iteration $n-1$ and joines them by an additional edge, the identity element is easy to characterize, in particular it is shown in \cite{stricharz-sandpile} that the identity element is the constant configuration $2$; in the same paper a graphical conjecture concerning the identity element of Sierpi\'nski gasket graphs $G_n$, for normal boundary conditions is given. We characterize here the identity element of $G_{n}$, for $n\in \N$ and normal boundary conditions.

We emphasize that the case of sinked boundary conditions has been considered in \cite{chen-sandpile-2020}, where the authors characterized the identity elements and gave several toppling identities in the case where the sink is one corner or two corners identified to build the sink. Due to the spatial symmetry of the gasket, it does not matter which corner or two corners are chosen as the sink.  We extend their method to normal boundary conditions and also borrow ideas from their graphical representation of toppling identities and identity element of the sandpile group.

\vspace{-0.3cm}
\paragraph{Some conventions and notations.}The underlying state space will be, for the rest of the paper, the graph $G_n=(V_n\cup\{s\}, E_n)$, $n\in\N$ the $n$-th level of the  Sierpi\'nski gasket graph with normal boundary conditions as in Figure \ref{fig:sierp_constr}, and the sandpile configurations are graphically represented by simply writing the number of chips at the corresponding vertices in a circle. The sink will not be mentioned in all the graphical representations, but we shall always have in mind the fact that from each of the three corner vertices there are two edges going to the same sink. Empty circles indicate an arbitrary amount of chips. To simplify notation, we also write $\mathcal{R}_n$ for the sandpile group of $G_n$, that is $\Rn:=\mathcal{R}(G_n)$.

 If on $G_n$, the left corner $o$ (filled in black) is chosen as a sink vertex, then by adding to $\eta$, $3^n$ chips to each of the remaining two corners and stabilizing results again in $\eta$; this is the claim \cite[Proposition 3.8]{chen-sandpile-2020}. Graphically this can be represented as follows.
\begin{center}
\begin{tikzpicture}[baseline=4.5ex]
    \node[shape=circle,fill=black, draw=black] (A) at (0,0) {};
    \node[shape=circle,draw=black] (B) at (2,0) {$3^n$};
    \node[shape=circle,draw=black] (D) at (1,1.73) {$3^n$};
    
    \path [-] (A) edge node[left] {} (B);
    \path [-] (A) edge node[left] {} (D);
    \path [-] (B) edge node[left] {} (D);
    
    \node[] at (1,1.73/3) {$\eta$};
\end{tikzpicture}
$\rightarrow$
\begin{tikzpicture}[baseline=4.5ex]
    \node[shape=circle,draw=black] (A) at (0,0) {$2\cdot 3^n$};
    \node[shape=circle,draw=black] (B) at (2,0) {};
    \node[shape=circle,draw=black] (D) at (1,1.73) {};
    
    \path [-] (A) edge node[left] {} (B);
    \path [-] (A) edge node[left] {} (D);
    \path [-] (B) edge node[left] {} (D);
    
    \node[] at (1,1.73/3) {$\eta$};
\end{tikzpicture}
\end{center}

\begin{definition}\label{def-m1} We define the sandpile configuration $M_1(x,y,z)$ on $G_1$ by setting the values at the inner vertices as below, and corner values $x,y,z\in\N$ are arbitrary: 
\begin{center}
\begin{tikzpicture}[baseline=7ex, scale=0.7]
    \node[shape=circle,draw=black] (A) at (0,0) {x};
    \node[shape=circle,draw=black] (B) at (2,0) {3};
    \node[shape=circle,draw=black] (C) at (4,0) {y};
    \node[shape=circle,draw=black] (D) at (1,1.73) {3};
    \node[shape=circle,draw=black] (E) at (3,1.73) {2};
    \node[shape=circle,draw=black] (F) at (2,1.73*2) {z} ;
    
    \path [-] (A) edge node[left] {} (B);
    \path [-] (A) edge node[left] {} (D);
    \path [-] (B) edge node[left] {} (D);
    \path [-] (B) edge node[left] {} (C);
    \path [-] (E) edge node[left] {} (B);
    \path [-] (E) edge node[left] {} (C);
    \path [-] (F) edge node[left] {} (E);
    \path [-] (F) edge node[left] {} (D);
    \path [-] (E) edge node[left] {} (D);
\end{tikzpicture}=
\begin{tikzpicture}[baseline=4.5ex]
    \node[shape=circle,draw=black] (A) at (0,0) {x};
    \node[shape=circle,draw=black] (B) at (2,0) {y};
    \node[shape=circle,draw=black] (D) at (1,1.73) {z};
    
    \path [-] (A) edge node[left] {} (B);
    \path [-] (A) edge node[left] {} (D);
    \path [-] (B) edge node[left] {} (D);
    
    \node[] at (1,1.73/3) {$M_1$};
\end{tikzpicture}=$\quad M_1(x,y,z)$.
\end{center}
 For $n\geq 1$, we iteratively define the sandpile configuration $M_{n+1}(x,y,z)$ with boundary values $x,y,z$ on $G_{n+1}$ by setting it equal to $M_n(x,3,3)$ in the lower left triangle, $M_n(3,y,2)$ in the lower right triangle and to $M_n(3,2,z)$ in the upper triangle: 
\begin{center}
\begin{tikzpicture}[baseline=6ex, scale=0.7]
    \node[shape=circle,draw=black] (A) at (0,0) {x};
    \node[shape=circle,draw=black] (B) at (2,0) {3};
    \node[shape=circle,draw=black] (C) at (4,0) {y};
    \node[shape=circle,draw=black] (D) at (1,1.73) {3};
    \node[shape=circle,draw=black] (E) at (3,1.73) {2};
    \node[shape=circle,draw=black] (F) at (2,1.73*2) {z} ;
    
    \path [-] (A) edge node[left] {} (B);
    \path [-] (A) edge node[left] {} (D);
    \path [-] (B) edge node[left] {} (D);
    \path [-] (B) edge node[left] {} (C);
    \path [-] (E) edge node[left] {} (B);
    \path [-] (E) edge node[left] {} (C);
    \path [-] (F) edge node[left] {} (E);
    \path [-] (F) edge node[left] {} (D);
    \path [-] (E) edge node[left] {} (D);
    
    \node[] at (1,1.73/3) {$M_n$};
    \node[] at (3,1.73/3) {$M_n$};
    \node[] at (2,1.73/3+1.73) {$M_n$};
\end{tikzpicture}=
\begin{tikzpicture}[baseline=4ex]
    \node[shape=circle,draw=black] (A) at (0,0) {x};
    \node[shape=circle,draw=black] (B) at (2,0) {y};
    \node[shape=circle,draw=black] (D) at (1,1.73) {z};
    
    \path [-] (A) edge node[left] {} (B);
    \path [-] (A) edge node[left] {} (D);
    \path [-] (B) edge node[left] {} (D);
    
    \node[] at (1,1.73/3) {$M_{n+1}$};
\end{tikzpicture}=$\quad M_{n+1}(x,y,z)$
\end{center}
\end{definition}

%By abuse of notation, we write in what follows $(\mathcal{R}(G_n),\oplus)$ for the sandpile group of $G_n$ with normal bounday conditions, and we do not include the sink in the vertex set of $G_n$, as defined previously, so the recurrent configurations and implicitely the identity $\mathsf{id}_n$ of this group is defined on all vertices of $G_n$. 
We consider first $M_1(2,1,1)$, add it to itself and stabilize all but the lower corner vertex, and investigate the patterns that appear during the stabilization. 
\begin{center}
$M_1(2,1,1)=$
    \begin{tikzpicture}[baseline=4ex, scale=0.6]
    \node[shape=circle,draw=black] (A) at (0,0) {2};
    \node[shape=circle,draw=black] (B) at (2,0) {3};
    \node[shape=circle,draw=black] (C) at (4,0) {1};
    \node[shape=circle,draw=black] (D) at (1,1.73) {3};
    \node[shape=circle,draw=black] (E) at (3,1.73) {2};
    \node[shape=circle,draw=black] (F) at (2,1.73*2) {1} ;
    
    \path [-] (A) edge node[left] {} (B);
    \path [-] (A) edge node[left] {} (D);
    \path [-] (B) edge node[left] {} (D);
    \path [-] (B) edge node[left] {} (C);
    \path [-] (E) edge node[left] {} (B);
    \path [-] (E) edge node[left] {} (C);
    \path [-] (F) edge node[left] {} (E);
    \path [-] (F) edge node[left] {} (D);
    \path [-] (E) edge node[left] {} (D);
    
    \node[] at (1,1.73/3) {};
    \node[] at (3,1.73/3) {};
    \node[] at (2,1.73/3+1.73) {};
\end{tikzpicture}
\end{center}
We first topple once the inner vertices, which results in the new sandpile configuration
\begin{center}
\begin{tikzpicture}[baseline=4.5ex]
    \node[shape=circle,draw=black] (A) at (0,0) {4};
    \node[shape=circle,draw=black] (B) at (2/2,0) {6};
    \node[shape=circle,draw=black] (C) at (4/2,0) {2};
    \node[shape=circle,draw=black] (D) at (1/2,1.73/2) {6};
    \node[shape=circle,draw=black] (E) at (3/2,1.73/2) {4};
    \node[shape=circle,draw=black] (F) at (2/2,1.73) {2} ;
    
    \path [-] (A) edge node[left] {} (B);
    \path [-] (A) edge node[left] {} (D);
    \path [-] (B) edge node[left] {} (D);
    \path [-] (B) edge node[left] {} (C);
    \path [-] (E) edge node[left] {} (B);
    \path [-] (E) edge node[left] {} (C);
    \path [-] (F) edge node[left] {} (E);
    \path [-] (F) edge node[left] {} (D);
    \path [-] (E) edge node[left] {} (D);
    
    \node[] at (1,1.73/3) {};
    \node[] at (3,1.73/3) {};
    \node[] at (2,1.73/3+1.73) {};
\end{tikzpicture}
$\rightarrow$
    \begin{tikzpicture}[baseline=4.5ex]
    \node[shape=circle,draw=black] (A) at (0,0) {6};
    \node[shape=circle,draw=black] (B) at (2/2,0) {3};
    \node[shape=circle,draw=black] (C) at (4/2,0) {3};
    \node[shape=circle,draw=black] (D) at (1/2,1.73/2) {3};
    \node[shape=circle,draw=black] (E) at (3/2,1.73/2) {6};
    \node[shape=circle,draw=black] (F) at (2/2,1.73) {3} ;
    
    \path [-] (A) edge node[left] {} (B);
    \path [-] (A) edge node[left] {} (D);
    \path [-] (B) edge node[left] {} (D);
    \path [-] (B) edge node[left] {} (C);
    \path [-] (E) edge node[left] {} (B);
    \path [-] (E) edge node[left] {} (C);
    \path [-] (F) edge node[left] {} (E);
    \path [-] (F) edge node[left] {} (D);
    \path [-] (E) edge node[left] {} (D);
    
    \node[] at (1,1.73/3) {};
    \node[] at (3,1.73/3) {};
    \node[] at (2,1.73/3+1.73) {};
\end{tikzpicture}
$\rightarrow$
    \begin{tikzpicture}[baseline=4.5ex]
    \node[shape=circle,draw=black] (A) at (0,0) {6};
    \node[shape=circle,draw=black] (B) at (2/2,0) {4};
    \node[shape=circle,draw=black] (C) at (4/2,0) {4};
    \node[shape=circle,draw=black] (D) at (1/2,1.73/2) {4};
    \node[shape=circle,draw=black] (E) at (3/2,1.73/2) {2};
    \node[shape=circle,draw=black] (F) at (2/2,1.73) {4} ;
    
    \path [-] (A) edge node[left] {} (B);
    \path [-] (A) edge node[left] {} (D);
    \path [-] (B) edge node[left] {} (D);
    \path [-] (B) edge node[left] {} (C);
    \path [-] (E) edge node[left] {} (B);
    \path [-] (E) edge node[left] {} (C);
    \path [-] (F) edge node[left] {} (E);
    \path [-] (F) edge node[left] {} (D);
    \path [-] (E) edge node[left] {} (D);
    
    \node[] at (1,1.73/3) {};
    \node[] at (3,1.73/3) {};
    \node[] at (2,1.73/3+1.73) {};
\end{tikzpicture}
\end{center}
We continue by stabilizing first vertices with height $4$, and then continue with the rest, but we do not topple the lower left corner:
\begin{center}
$\rightarrow$
\begin{tikzpicture}[baseline=4.5ex]
    \node[shape=circle,draw=black] (A) at (0,0) {6};
    \node[shape=circle,draw=black] (B) at (2/2,0) {6};
    \node[shape=circle,draw=black] (C) at (4/2,0) {0};
    \node[shape=circle,draw=black] (D) at (1/2,1.73/2) {6};
    \node[shape=circle,draw=black] (E) at (3/2,1.73/2) {6};
    \node[shape=circle,draw=black] (F) at (2/2,1.73) {0} ;
    
    \path [-] (A) edge node[left] {} (B);
    \path [-] (A) edge node[left] {} (D);
    \path [-] (B) edge node[left] {} (D);
    \path [-] (B) edge node[left] {} (C);
    \path [-] (E) edge node[left] {} (B);
    \path [-] (E) edge node[left] {} (C);
    \path [-] (F) edge node[left] {} (E);
    \path [-] (F) edge node[left] {} (D);
    \path [-] (E) edge node[left] {} (D);
    
    \node[] at (1,1.73/3) {};
    \node[] at (3,1.73/3) {};
    \node[] at (2,1.73/3+1.73) {};
\end{tikzpicture}
$\rightarrow$
\begin{tikzpicture}[baseline=4.5ex]
    \node[shape=circle,draw=black] (A) at (0,0) {8};
    \node[shape=circle,draw=black] (B) at (2/2,0) {4};
    \node[shape=circle,draw=black] (C) at (4/2,0) {2};
    \node[shape=circle,draw=black] (D) at (1/2,1.73/2) {4};
    \node[shape=circle,draw=black] (E) at (3/2,1.73/2) {4};
    \node[shape=circle,draw=black] (F) at (2/2,1.73) {2} ;
    
    \path [-] (A) edge node[left] {} (B);
    \path [-] (A) edge node[left] {} (D);
    \path [-] (B) edge node[left] {} (D);
    \path [-] (B) edge node[left] {} (C);
    \path [-] (E) edge node[left] {} (B);
    \path [-] (E) edge node[left] {} (C);
    \path [-] (F) edge node[left] {} (E);
    \path [-] (F) edge node[left] {} (D);
    \path [-] (E) edge node[left] {} (D);
    
    \node[] at (1,1.73/3) {};
    \node[] at (3,1.73/3) {};
    \node[] at (2,1.73/3+1.73) {};
\end{tikzpicture}
$\rightarrow$
\begin{tikzpicture}[baseline=4.5ex]
    \node[shape=circle,draw=black] (A) at (0,0) {10};
    \node[shape=circle,draw=black] (B) at (2/2,0) {2};
    \node[shape=circle,draw=black] (C) at (4/2,0) {4};
    \node[shape=circle,draw=black] (D) at (1/2,1.73/2) {2};
    \node[shape=circle,draw=black] (E) at (3/2,1.73/2) {2};
    \node[shape=circle,draw=black] (F) at (2/2,1.73) {4} ;
    
    \path [-] (A) edge node[left] {} (B);
    \path [-] (A) edge node[left] {} (D);
    \path [-] (B) edge node[left] {} (D);
    \path [-] (B) edge node[left] {} (C);
    \path [-] (E) edge node[left] {} (B);
    \path [-] (E) edge node[left] {} (C);
    \path [-] (F) edge node[left] {} (E);
    \path [-] (F) edge node[left] {} (D);
    \path [-] (E) edge node[left] {} (D);
    
    \node[] at (1,1.73/3) {};
    \node[] at (3,1.73/3) {};
    \node[] at (2,1.73/3+1.73) {};
\end{tikzpicture}
$\rightarrow$
\begin{tikzpicture}[baseline=4.5ex]
    \node[shape=circle,draw=black] (A) at (0,0) {10};
    \node[shape=circle,draw=black] (B) at (2/2,0) {4};
    \node[shape=circle,draw=black] (C) at (4/2,0) {0};
    \node[shape=circle,draw=black] (D) at (1/2,1.73/2) {4};
    \node[shape=circle,draw=black] (E) at (3/2,1.73/2) {6};
    \node[shape=circle,draw=black] (F) at (2/2,1.73) {0} ;
    
    \path [-] (A) edge node[left] {} (B);
    \path [-] (A) edge node[left] {} (D);
    \path [-] (B) edge node[left] {} (D);
    \path [-] (B) edge node[left] {} (C);
    \path [-] (E) edge node[left] {} (B);
    \path [-] (E) edge node[left] {} (C);
    \path [-] (F) edge node[left] {} (E);
    \path [-] (F) edge node[left] {} (D);
    \path [-] (E) edge node[left] {} (D);
    
    \node[] at (1,1.73/3) {};
    \node[] at (3,1.73/3) {};
    \node[] at (2,1.73/3+1.73) {};
\end{tikzpicture}
$\rightarrow$
\begin{tikzpicture}[baseline=4.5ex]
    \node[shape=circle,draw=black] (A) at (0,0) {12};
    \node[shape=circle,draw=black] (B) at (2/2,0) {2};
    \node[shape=circle,draw=black] (C) at (4/2,0) {2};
    \node[shape=circle,draw=black] (D) at (1/2,1.73/2) {2};
    \node[shape=circle,draw=black] (E) at (3/2,1.73/2) {4};
    \node[shape=circle,draw=black] (F) at (2/2,1.73) {2} ;
    
    \path [-] (A) edge node[left] {} (B);
    \path [-] (A) edge node[left] {} (D);
    \path [-] (B) edge node[left] {} (D);
    \path [-] (B) edge node[left] {} (C);
    \path [-] (E) edge node[left] {} (B);
    \path [-] (E) edge node[left] {} (C);
    \path [-] (F) edge node[left] {} (E);
    \path [-] (F) edge node[left] {} (D);
    \path [-] (E) edge node[left] {} (D);
    
    \node[] at (1,1.73/3) {};
    \node[] at (3,1.73/3) {};
    \node[] at (2,1.73/3+1.73) {};
\end{tikzpicture}
$\rightarrow$
\begin{tikzpicture}[baseline=4.5ex]
    \node[shape=circle,draw=black] (A) at (0,0) {12};
    \node[shape=circle,draw=black] (B) at (2/2,0) {4};
    \node[shape=circle,draw=black] (C) at (4/2,0) {1};
    \node[shape=circle,draw=black] (D) at (1/2,1.73/2) {4};
    \node[shape=circle,draw=black] (E) at (3/2,1.73/2) {2};
    \node[shape=circle,draw=black] (F) at (2/2,1.73) {1} ;
    
    \path [-] (A) edge node[left] {} (B);
    \path [-] (A) edge node[left] {} (D);
    \path [-] (B) edge node[left] {} (D);
    \path [-] (B) edge node[left] {} (C);
    \path [-] (E) edge node[left] {} (B);
    \path [-] (E) edge node[left] {} (C);
    \path [-] (F) edge node[left] {} (E);
    \path [-] (F) edge node[left] {} (D);
    \path [-] (E) edge node[left] {} (D);
    
    \node[] at (1,1.73/3) {};
    \node[] at (3,1.73/3) {};
    \node[] at (2,1.73/3+1.73) {};
\end{tikzpicture}
$\rightarrow$
\begin{tikzpicture}[baseline=4.5ex]
    \node[shape=circle,draw=black] (A) at (0,0) {14};
    \node[shape=circle,draw=black] (B) at (2/2,0) {1};
    \node[shape=circle,draw=black] (C) at (4/2,0) {2};
    \node[shape=circle,draw=black] (D) at (1/2,1.73/2) {1};
    \node[shape=circle,draw=black] (E) at (3/2,1.73/2) {4};
    \node[shape=circle,draw=black] (F) at (2/2,1.73) {2} ;
    
    \path [-] (A) edge node[left] {} (B);
    \path [-] (A) edge node[left] {} (D);
    \path [-] (B) edge node[left] {} (D);
    \path [-] (B) edge node[left] {} (C);
    \path [-] (E) edge node[left] {} (B);
    \path [-] (E) edge node[left] {} (C);
    \path [-] (F) edge node[left] {} (E);
    \path [-] (F) edge node[left] {} (D);
    \path [-] (E) edge node[left] {} (D);
    
    \node[] at (1,1.73/3) {};
    \node[] at (3,1.73/3) {};
    \node[] at (2,1.73/3+1.73) {};
\end{tikzpicture}
$\rightarrow$
\begin{tikzpicture}[baseline=4.5ex]
    \node[shape=circle,draw=black] (A) at (0,0) {14};
    \node[shape=circle,draw=black] (B) at (2/2,0) {3};
    \node[shape=circle,draw=black] (C) at (4/2,0) {1};
    \node[shape=circle,draw=black] (D) at (1/2,1.73/2) {3};
    \node[shape=circle,draw=black] (E) at (3/2,1.73/2) {2};
    \node[shape=circle,draw=black] (F) at (2/2,1.73) {1} ;
    
    \path [-] (A) edge node[left] {} (B);
    \path [-] (A) edge node[left] {} (D);
    \path [-] (B) edge node[left] {} (D);
    \path [-] (B) edge node[left] {} (C);
    \path [-] (E) edge node[left] {} (B);
    \path [-] (E) edge node[left] {} (C);
    \path [-] (F) edge node[left] {} (E);
    \path [-] (F) edge node[left] {} (D);
    \path [-] (E) edge node[left] {} (D);
    
    \node[] at (1,1.73/3) {};
    \node[] at (3,1.73/3) {};
    \node[] at (2,1.73/3+1.73) {};
\end{tikzpicture}
\end{center}
Now all but the lower left corner vertex are stable, and we have added $4\cdot 3^1-2=10$ chips to the lower left corner vertex during stabilization of the other vertices, that is, we have obtained the configuration $M_1(2+4\cdot 3^1,1,1)$. Recursively, take now  $M_2(2,1,1)$, add it to itself, and stabilize all the vertices with the exception of the lower left corner:
\begin{center}
    \begin{tikzpicture}[baseline=6ex, scale=0.7]
    \node[shape=circle,draw=black] (A) at (0,0) {4};
    \node[shape=circle,draw=black] (B) at (2,0) {6};
    \node[shape=circle,draw=black] (C) at (4,0) {2};
    \node[shape=circle,draw=black] (D) at (1,1.73) {6};
    \node[shape=circle,draw=black] (E) at (3,1.73) {4};
    \node[shape=circle,draw=black] (F) at (2,1.73*2) {2} ;
    
    \path [-] (A) edge node[left] {} (B);
    \path [-] (A) edge node[left] {} (D);
    \path [-] (B) edge node[left] {} (D);
    \path [-] (B) edge node[left] {} (C);
    \path [-] (E) edge node[left] {} (B);
    \path [-] (E) edge node[left] {} (C);
    \path [-] (F) edge node[left] {} (E);
    \path [-] (F) edge node[left] {} (D);
    \path [-] (E) edge node[left] {} (D);
    
    \node[] at (1,1.73/3) {$2M_1$};
    \node[] at (3,1.73/3) {$2M_1$};
    \node[] at (2,1.73/3+1.73) {$2M_1$};
\end{tikzpicture}
$\rightarrow$
\begin{tikzpicture}[baseline=7ex, scale=0.8]
    \node[shape=circle,draw=black] (A) at (0,0) {4};
    \node[shape=circle,draw=black] (B) at (2,0) {16};
    \node[shape=circle,draw=black] (C) at (4,0) {1};
    \node[shape=circle,draw=black] (D) at (1,1.73) {16};
    \node[shape=circle,draw=black] (E) at (3,1.73) {2};
    \node[shape=circle,draw=black] (F) at (2,1.73*2) {1} ;
    
    \path [-] (A) edge node[left] {} (B);
    \path [-] (A) edge node[left] {} (D);
    \path [-] (B) edge node[left] {} (D);
    \path [-] (B) edge node[left] {} (C);
    \path [-] (E) edge node[left] {} (B);
    \path [-] (E) edge node[left] {} (C);
    \path [-] (F) edge node[left] {} (E);
    \path [-] (F) edge node[left] {} (D);
    \path [-] (E) edge node[left] {} (D);
    
    \node[] at (1,1.73/3) {$2M_1$};
    \node[] at (3,1.73/3) {$M_1$};
    \node[] at (2,1.73/3+1.73) {$M_1$};
\end{tikzpicture}
$\rightarrow$
\begin{tikzpicture}[baseline=7ex, scale=0.8]
    \node[shape=circle,draw=black] (A) at (0,0) {28};
    \node[shape=circle,draw=black] (B) at (2,0) {4};
    \node[shape=circle,draw=black] (C) at (4,0) {1};
    \node[shape=circle,draw=black] (D) at (1,1.73) {4};
    \node[shape=circle,draw=black] (E) at (3,1.73) {2};
    \node[shape=circle,draw=black] (F) at (2,1.73*2) {1} ;
    
    \path [-] (A) edge node[left] {} (B);
    \path [-] (A) edge node[left] {} (D);
    \path [-] (B) edge node[left] {} (D);
    \path [-] (B) edge node[left] {} (C);
    \path [-] (E) edge node[left] {} (B);
    \path [-] (E) edge node[left] {} (C);
    \path [-] (F) edge node[left] {} (E);
    \path [-] (F) edge node[left] {} (D);
    \path [-] (E) edge node[left] {} (D);
    
    \node[] at (1,1.73/3) {$2M_1$};
    \node[] at (3,1.73/3) {$M_1$};
    \node[] at (2,1.73/3+1.73) {$M_1$};
\end{tikzpicture}
$\rightarrow$
\hspace{1cm}
\begin{tikzpicture}[baseline=6ex, scale=0.7]
    \node[shape=circle,draw=black] (A) at (0,0) {38};
    \node[shape=circle,draw=black] (B) at (2,0) {3};
    \node[shape=circle,draw=black] (C) at (4,0) {1};
    \node[shape=circle,draw=black] (D) at (1,1.73) {3};
    \node[shape=circle,draw=black] (E) at (3,1.73) {2};
    \node[shape=circle,draw=black] (F) at (2,1.73*2) {1} ;
    
    \path [-] (A) edge node[left] {} (B);
    \path [-] (A) edge node[left] {} (D);
    \path [-] (B) edge node[left] {} (D);
    \path [-] (B) edge node[left] {} (C);
    \path [-] (E) edge node[left] {} (B);
    \path [-] (E) edge node[left] {} (C);
    \path [-] (F) edge node[left] {} (E);
    \path [-] (F) edge node[left] {} (D);
    \path [-] (E) edge node[left] {} (D);
    
    \node[] at (1,1.73/3) {$M_1$};
    \node[] at (3,1.73/3) {$M_1$};
    \node[] at (2,1.73/3+1.73) {$M_1$};
\end{tikzpicture}
\end{center}
In the third stabilization procedure we have used  \cite[Proposition 3.8]{chen-sandpile-2020}, and the lower corner vertex collected $34=4\cdot 3^2-2$ particles during stabilization of the other vertices, and the resulting configuration is $M_2(2+4\cdot 3^2)$. Then, the inductive step follows immediately in the same way as in the proof of \cite[Proposition 3.8]{chen-sandpile-2020}; thus on $G_n$, we have shown that
starting with the configuration $M_n(2,1,1)$ adding to itself and stabilizing all but the lower left corner, one adds $4\cdot 3^n-2$ particles to the lower left corner during stabilization and reaches the configuration $M_n(2+4\cdot 3^n,1,1)$ which is still not stable when considering normal boundary conditions, therefore
\begin{equation}\label{eq:2mn}
\left(2M_n(2,1,1)\right)^{\circ}=\left(M_n(2+4\cdot 3^n,1,1)\right)^{\circ}
\end{equation}
and this may be used in describing the identity element of $G_n$ with normal boundary conditions.

\begin{theorem}\label{thm:id}
Denote by $M_n^+$ (respectively $M_n^-$) the sandpile configuration on $G_n$ obtained from $M_n$ by rotating $G_n$  counterclockwise (respectively clockwise) $120^\circ$. Then, for any $n\geq 1$ the identity element $\mathsf{id}_n$ of the sandpile group $(\Rn,\oplus)$ of $G_n$ with normal boundary conditions is given by:
\begin{center}
    \begin{tikzpicture}[baseline=6ex, scale=0.7]
    \node[shape=circle,draw=black] (A) at (0,0) {2};
    \node[shape=circle,draw=black] (B) at (2,0) {2};
    \node[shape=circle,draw=black] (C) at (4,0) {2};
    \node[shape=circle,draw=black] (D) at (1,1.73) {2};
    \node[shape=circle,draw=black] (E) at (3,1.73) {2};
    \node[shape=circle,draw=black] (F) at (2,1.73*2) {2} ;
    
    \path [-] (A) edge node[left] {} (B);
    \path [-] (A) edge node[left] {} (D);
    \path [-] (B) edge node[left] {} (D);
    \path [-] (B) edge node[left] {} (C);
    \path [-] (E) edge node[left] {} (B);
    \path [-] (E) edge node[left] {} (C);
    \path [-] (F) edge node[left] {} (E);
    \path [-] (F) edge node[left] {} (D);
    \path [-] (E) edge node[left] {} (D);
    
    \node[] at (1,1.73/3) {$M_{n}$};
    \node[] at (3,1.73/3) {$M^+_{n}$};
    \node[] at (2,1.73/3+1.73) {$M^-_{n}$};
\end{tikzpicture}=
$\quad \mathsf{id}_{n+1}$.
\end{center}
\end{theorem}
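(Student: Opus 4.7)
The plan is to show that the configuration $\tilde c$ defined in the statement (the ``pasted'' configuration with entries $M_n$, $M_n^+$, $M_n^-$ in the three sub-triangles and value $2$ at every junction and outer corner) is (i)~stable, (ii)~recurrent, and (iii)~idempotent for $\oplus$. Since the identity of $\Rn$ is the unique recurrent idempotent, (ii) and (iii) together force $\tilde c = \mathsf{id}_{n+1}$. Stability is immediate: a straightforward induction on the recursive definition of $M_n$ shows that all values of $M_n$ (hence of $\tilde c$) lie in $\{1,2,3\}$, while every vertex of $G_{n+1}$ has degree four under normal boundary.

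For recurrence I would apply Dhar's burning algorithm. Under normal boundary, only the three outer corners of $G_{n+1}$ have $\beta(v)=2$, so the burning test reduces to verifying that starting from $\tilde c$ and adding two chips at each outer corner triggers a toppling cascade in which every vertex fires exactly once and the original $\tilde c$ is recovered. I would proceed by induction on $n$ with a strengthened hypothesis, describing the burning of $M_n$ with prescribed chip inputs at its three corners and prescribed outputs back at the corners after one full round of topplings. The self-similar structure of $G_{n+1}$ then lets one track the burning wave as it enters each sub-triangle through its outer corner, propagates through the $M_n$ or $M_n^{\pm}$ interior, and crosses each midpoint corner to trigger the neighbouring sub-triangle in the matching rotational fashion.

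Idempotence is the heart of the argument, and the identity \eqref{eq:2mn} is its main tool. Doubling $\tilde c$ yields value $4$ at each of the six ``boundary-level'' vertices (three outer corners plus three midpoints) and a doubled $M_n$ (or $M_n^{\pm}$) interior in each sub-triangle. I would stabilize by first suspending the ``heavy'' corner of each sub-triangle; by the rotational placement of $M_n, M_n^+, M_n^-$, the heavy corner of each sub-triangle (the one adjacent to both value-$3$ interior midpoints) coincides exactly with the outer corner of $G_{n+1}$ the sub-triangle contains. Invoking the rotational analogue of \eqref{eq:2mn} inside each sub-triangle, each outer corner accumulates $4\cdot 3^n-2$ extra chips while the midpoints absorb matching symmetric contributions from the two sub-triangles meeting at them. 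A final cascade of topplings at the three outer corners then sheds the excess chips via the two sink-edges at each outer corner and restores every vertex to its $\tilde c$-value.

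The main obstacle is the chip bookkeeping in the idempotence step. The identity \eqref{eq:2mn} is derived in the text for the particular corner values $(2,1,1)$, whereas in the doubling of $\tilde c$ each sub-triangle starts from corner values $(2,2,2)$; one has to rerun the toppling argument that precedes \eqref{eq:2mn} in this richer setting and verify that the clean $4\cdot 3^n-2$ accumulation at the heavy corner still appears. Equally delicate is the conservation check at the three midpoint corners: the contributions flowing in from both adjacent sub-triangles must, together with the midpoint's own topplings, pair up so that the final value is exactly $2$, and the rotational choice of $M_n, M_n^+, M_n^-$ is precisely what enforces this cancellation.
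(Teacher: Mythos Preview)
Your overall strategy---stability is trivial, recurrence via the burning test, idempotence by showing $(2\tilde c)^\circ=\tilde c$ using \eqref{eq:2mn}---is exactly the paper's approach. The obstacle you flag at the end, however, is not a real one, and the paper's resolution of it is worth spelling out because it also dissolves your second worry about the midpoint bookkeeping.

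You are concerned that \eqref{eq:2mn} is stated for $M_n(2,1,1)$ while each sub-triangle of $\tilde c$ carries corner values $(2,2,2)$, hence $(4,4,4)$ after doubling. The point is that a junction vertex has exactly two neighbours in each of the two sub-triangles it belongs to, so every time it topples it sends two chips into each side, and every time its four neighbours topple it receives two chips from each side. Because the configuration $2\tilde c$ is invariant under the $120^\circ$ rotation, the toppling sequences in the three sub-triangles are identical up to rotation; hence one may \emph{split} each junction point into two half-vertices, one per sub-triangle, each carrying half the mass. The doubled junction value $4$ thus splits as $2+2$, and each sub-triangle now carries precisely $2M_n(2,1,1)$ (outer corner $4$, two junction-halves $2$), so \eqref{eq:2mn} applies verbatim with no re-derivation needed.

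After invoking \eqref{eq:2mn} in each sub-triangle you obtain $M_n(2+4\cdot 3^n,1,1)$ (suitably rotated). Merging the junction-halves back gives $1+1=2$ at every midpoint---this is the ``cancellation'' you were worried about, and it is automatic. The three outer corners now hold $f_n=4\cdot 3^n+2$ each, i.e.\ $\tilde c + 4\cdot 3^n(\delta_1+\delta_2+\delta_3)$. Since the burning vector for $G_{n+1}$ under normal boundary is $\beta=2(\delta_1+\delta_2+\delta_3)$, this excess is $2\cdot 3^n$ copies of $\beta$, and recurrence of $\tilde c$ (which you have already argued) lets you shed it via $2\cdot 3^n$ applications of Dhar's identity test, returning $\tilde c$.
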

The proof of Theorem \ref{thm:id} follows by an easy induction argument together with the burning algorithm, by matching three copies of $M_n$, rotated as in the claim, and by setting the sandpile configuration equal to two at cut (or {\it junction}) points. See Figure \ref{fig:identity} for a graphical representation of the identity element of $\mathcal{R}_2,\mathcal{R}_3$, $\mathcal{R}_4$ and $\mathcal{R}_5$.

\begin{proof}[Proof of Theorem \ref{thm:id}]
The fact that $\mathsf{id}_n$ is recurrent can be checked by induction and by applying the burning algorithm. We now prove that $\mathsf{id}_n$ as considered in the claim is indeed the identity of the sandpile group. We do this by proving that $(2\mathsf{id}_{n})^{\circ}=\mathsf{id}_{n}$. In doing so, we first show by induction over $n$ that
for the sequence $f_{n}=4\cdot 3^{n}+2$, for $n\in\N$, if $\mathsf{id}_{n+1}$ is given as in the statement of the theorem,  then it holds
\begin{align*}
    (2\mathsf{id}_{n+1})^\circ=(\mathsf{id}_{n+1}+4\cdot 3^{n}(\delta_1+\delta_2+\delta_3))^\circ
    \end{align*}
where $\delta_1$ is the sandpile configuration that is $1$ at the lower left corner vertex and $0$ elsewhere, $\delta_2$ is the vector that is $1$ at the lower right corner and $0$ elsewhere, and $\delta_3$  is the vector that is $1$ at the upper corner and $0$ elsewhere. Graphically, this means showing that:
\begin{center}
\begin{tikzpicture}[baseline=7ex, scale=0.8]
    \node[shape=circle,draw=black] (A) at (0,0) {4};
    \node[shape=circle,draw=black] (B) at (2,0) {4};
    \node[shape=circle,draw=black] (C) at (4,0) {4};
    \node[shape=circle,draw=black] (D) at (1,1.73) {4};
    \node[shape=circle,draw=black] (E) at (3,1.73) {4};
    \node[shape=circle,draw=black] (F) at (2,1.73*2) {4} ;
    
    \path [-] (A) edge node[left] {} (B);
    \path [-] (A) edge node[left] {} (D);
    \path [-] (B) edge node[left] {} (D);
    \path [-] (B) edge node[left] {} (C);
    \path [-] (E) edge node[left] {} (B);
    \path [-] (E) edge node[left] {} (C);
    \path [-] (F) edge node[left] {} (E);
    \path [-] (F) edge node[left] {} (D);
    \path [-] (E) edge node[left] {} (D);
    
    \node[] at (1,1.73/3) {$2M_{n}$};
    \node[] at (3,1.73/3) {$2M^+_{n}$};
    \node[] at (2,1.73/3+1.73) {$2M^-_{n}$};
\end{tikzpicture}
$\rightarrow$
    \begin{tikzpicture}[baseline=6ex, scale=0.7]
    \node[shape=circle,draw=black] (A) at (0,0) {$f_{n}$};
    \node[shape=circle,draw=black] (B) at (2,0) {2};
    \node[shape=circle,draw=black] (C) at (4,0) {$f_{n}$};
    \node[shape=circle,draw=black] (D) at (1,1.73) {2};
    \node[shape=circle,draw=black] (E) at (3,1.73) {2};
    \node[shape=circle,draw=black] (F) at (2,1.73*2) {$f_{n}$} ;
    
    \path [-] (A) edge node[left] {} (B);
    \path [-] (A) edge node[left] {} (D);
    \path [-] (B) edge node[left] {} (D);
    \path [-] (B) edge node[left] {} (C);
    \path [-] (E) edge node[left] {} (B);
    \path [-] (E) edge node[left] {} (C);
    \path [-] (F) edge node[left] {} (E);
    \path [-] (F) edge node[left] {} (D);
    \path [-] (E) edge node[left] {} (D);
    
    \node[] at (1,1.73/3) {$M_{n}$};
    \node[] at (3,1.73/3) {$M^+_{n}$};
    \node[] at (2,1.73/3+1.73) {$M^-_{n}$};
\end{tikzpicture}
\end{center}
For the induction base $n=1$, it is easy to see that 
\begin{center}
    \begin{tikzpicture}[baseline=6ex, scale=0.7]
    \node[shape=circle,draw=black] (A) at (0,0) {4};
    \node[shape=circle,draw=black] (B) at (2,0) {4};
    \node[shape=circle,draw=black] (C) at (4,0) {4};
    \node[shape=circle,draw=black] (D) at (1,1.73) {4};
    \node[shape=circle,draw=black] (E) at (3,1.73) {4};
    \node[shape=circle,draw=black] (F) at (2,1.73*2) {4} ;
    
    \path [-] (A) edge node[left] {} (B);
    \path [-] (A) edge node[left] {} (D);
    \path [-] (B) edge node[left] {} (D);
    \path [-] (B) edge node[left] {} (C);
    \path [-] (E) edge node[left] {} (B);
    \path [-] (E) edge node[left] {} (C);
    \path [-] (F) edge node[left] {} (E);
    \path [-] (F) edge node[left] {} (D);
    \path [-] (E) edge node[left] {} (D);
    
    \node[] at (1,1.73/3) {$2M_1$};
    \node[] at (3,1.73/3) {$2M_1^+$};
    \node[] at (2,1.73/3+1.73) {$2M_1^-$};
\end{tikzpicture}
$\rightarrow$
    \begin{tikzpicture}[baseline=7ex, scale=0.8]
    \node[shape=circle,draw=black] (A) at (0,0) {14};
    \node[shape=circle,draw=black] (B) at (2,0) {2};
    \node[shape=circle,draw=black] (C) at (4,0) {14};
    \node[shape=circle,draw=black] (D) at (1,1.73) {2};
    \node[shape=circle,draw=black] (E) at (3,1.73) {2};
    \node[shape=circle,draw=black] (F) at (2,1.73*2) {14} ;
    
    \path [-] (A) edge node[left] {} (B);
    \path [-] (A) edge node[left] {} (D);
    \path [-] (B) edge node[left] {} (D);
    \path [-] (B) edge node[left] {} (C);
    \path [-] (E) edge node[left] {} (B);
    \path [-] (E) edge node[left] {} (C);
    \path [-] (F) edge node[left] {} (E);
    \path [-] (F) edge node[left] {} (D);
    \path [-] (E) edge node[left] {} (D);
    
    \node[] at (1,1.73/3) {$M_1$};
    \node[] at (3,1.73/3) {$M_1^+$};
    \node[] at (2,1.73/3+1.73) {$M_1^-$};
\end{tikzpicture}
\end{center}
since the topplings in the three subcopies $G_0$ of $G_1$ interact only through the junction points, and every time a junction point topples it sents two chips in each of the two adjacent triangles, and every time the four neighbors of a junction point topple, the junction point receives two chips from each copy of $G_0$. So performing the topplings on $G_1$ for the given configuration is equivalent to performing the topplings on $G_0$ with half the mass (two instead of four chips) at junction points and using the lower left vertex as the one that collects the mass and does not topple. This is allowed due to the Abelian property of the model. That is, we consider $2M_1(2,1,1)$ and bring the mass to the lower left corner. Doing the same for $M_n^+$ and $M_n^{-}$ in the remaining two triangles, using that $M_n^+$ and $M_n^{-}$  are rotations of $M_n$, and adding the mass at the common junction points gives the induction base:
$$(2\mathsf{id}_2)^{\circ}=(2M_2(2,1,1))^{\circ}=(M_2(2+4\cdot 3^1,1,1))^{\circ}=
(\mathsf{id}_{2}+4\cdot 3^{1}(\delta_1+\delta_2+\delta_3))^\circ=\mathsf{id}_2,$$
where the last equation above follows from Dhar's identity test since each of the three corner vertices is connected by two edges to the sink. 
The inductive step follows in the same way and it can be easily understood graphically as below:

\begin{center}
\begin{tikzpicture}[baseline=6ex]
    \node[shape=circle,draw=black] (A) at (0,0) {4};
    \node[shape=circle,draw=black] (B) at (2,0) {4};
    \node[shape=circle,draw=black] (C) at (4,0) {4};
    \node[shape=circle,draw=black] (D) at (1,1.73) {4};
    \node[shape=circle,draw=black] (E) at (3,1.73) {4};
    \node[shape=circle,draw=black] (F) at (2,1.73*2) {4} ;
    
    \path [-] (A) edge node[left] {} (B);
    \path [-] (A) edge node[left] {} (D);
    \path [-] (B) edge node[left] {} (D);
    \path [-] (B) edge node[left] {} (C);
    \path [-] (E) edge node[left] {} (B);
    \path [-] (E) edge node[left] {} (C);
    \path [-] (F) edge node[left] {} (E);
    \path [-] (F) edge node[left] {} (D);
    \path [-] (E) edge node[left] {} (D);
    
    \node[] at (1,1.73/3) {$2M_n$};
    \node[] at (3,1.73/3) {$2M_n^+$};
    \node[] at (2,1.73/3+1.73) {$2M_n^-$};
\end{tikzpicture}
$\Leftrightarrow$
\begin{tikzpicture}[baseline=6ex]
    \node[shape=circle,draw=black] (A) at (0,0) {4};
    \node[shape=circle,draw=black] (B) at (2,0) {2};
    \node[shape=circle,draw=black] (G) at (2+1,0) {2};
    \node[shape=circle,draw=black] (C) at (4+1,0) {4};
    \node[shape=circle,draw=black] (D) at (1,1.73) {2};
    \node[shape=circle,draw=black] (E) at (3+1,1.73) {2};
    \node[shape=circle,draw=black] (F) at (2+1/2,1.73*2+1) {4};
    \node[shape=circle,draw=black] (H) at (1+1/2,1.73+1) {2};
    \node[shape=circle,draw=black] (I) at (4-1/2,1.73+1) {2};
    
    \path [-] (A) edge node[left] {} (B);
    \path [-] (A) edge node[left] {} (D);
    \path [-] (B) edge node[left] {} (D);
    \path [-] (G) edge node[left] {} (C);
    \path [-] (G) edge node[left] {} (E);
    \path [-] (E) edge node[left] {} (C);
    \path [-] (F) edge node[left] {} (H);
    \path [-] (F) edge node[left] {} (I);
    \path [-] (I) edge node[left] {} (H);
    
    \draw [dotted] (G) edge node[left] {} (B);
    \draw [dotted] (E) edge node[left] {} (I);
    \draw [dotted] (D) edge node[left] {} (H);
    
    \node[] at (1,1.73/3) {$2M_n$};
    \node[] at (3+1,1.73/3) {$2M_n^+$};
    \node[] at (2+1/2,1.73/3+1.73+1) {$2M_n^-$};
\end{tikzpicture}
$\rightarrow$
\\
$\rightarrow$
\begin{tikzpicture}[baseline=6ex]
    \node[shape=circle,draw=black] (A) at (0,0) {$f_n$};
    \node[shape=circle,draw=black] (B) at (2,0) {1};
    \node[shape=circle,draw=black] (G) at (2+1,0) {1};
    \node[shape=circle,draw=black] (C) at (4+1,0) {$f_n$};
    \node[shape=circle,draw=black] (D) at (1,1.73) {1};
    \node[shape=circle,draw=black] (E) at (3+1,1.73) {1};
    \node[shape=circle,draw=black] (F) at (2+1/2,1.73*2+1) {$f_n$};
    \node[shape=circle,draw=black] (H) at (1+1/2,1.73+1) {1};
    \node[shape=circle,draw=black] (I) at (4-1/2,1.73+1) {1};
    
    \path [-] (A) edge node[left] {} (B);
    \path [-] (A) edge node[left] {} (D);
    \path [-] (B) edge node[left] {} (D);
    \path [-] (G) edge node[left] {} (C);
    \path [-] (G) edge node[left] {} (E);
    \path [-] (E) edge node[left] {} (C);
    \path [-] (F) edge node[left] {} (H);
    \path [-] (F) edge node[left] {} (I);
    \path [-] (I) edge node[left] {} (H);
    
    \draw [dotted] (G) edge node[left] {} (B);
    \draw [dotted] (E) edge node[left] {} (I);
    \draw [dotted] (D) edge node[left] {} (H);
    
    \node[] at (1,1.73/3) {$M_n$};
    \node[] at (3+1,1.73/3) {$M_n^+$};
    \node[] at (2+1/2,1.73/3+1.73+1) {$M_n^-$};
\end{tikzpicture}
$\Leftrightarrow$
\begin{tikzpicture}[baseline=6ex]
    \node[shape=circle,draw=black] (A) at (0,0) {$f_n$};
    \node[shape=circle,draw=black] (B) at (2,0) {2};
    \node[shape=circle,draw=black] (C) at (4,0) {$f_n$};
    \node[shape=circle,draw=black] (D) at (1,1.73) {2};
    \node[shape=circle,draw=black] (E) at (3,1.73) {2};
    \node[shape=circle,draw=black] (F) at (2,1.73*2) {$f_n$} ;
    
    \path [-] (A) edge node[left] {} (B);
    \path [-] (A) edge node[left] {} (D);
    \path [-] (B) edge node[left] {} (D);
    \path [-] (B) edge node[left] {} (C);
    \path [-] (E) edge node[left] {} (B);
    \path [-] (E) edge node[left] {} (C);
    \path [-] (F) edge node[left] {} (E);
    \path [-] (F) edge node[left] {} (D);
    \path [-] (E) edge node[left] {} (D);
    
    \node[] at (1,1.73/3) {$M_n$};
    \node[] at (3,1.73/3) {$M_n^+$};
    \node[] at (2,1.73/3+1.73) {$M_n^-$};
\end{tikzpicture}$\rightarrow$
\\[2\baselineskip]$\rightarrow$
\begin{tikzpicture}[baseline=6ex]
    \node[shape=circle,draw=black] (A) at (0,0) {2};
    \node[shape=circle,draw=black] (B) at (2,0) {2};
    \node[shape=circle,draw=black] (C) at (4,0) {2};
    \node[shape=circle,draw=black] (D) at (1,1.73) {2};
    \node[shape=circle,draw=black] (E) at (3,1.73) {2};
    \node[shape=circle,draw=black] (F) at (2,1.73*2) {2} ;
    
    \path [-] (A) edge node[left] {} (B);
    \path [-] (A) edge node[left] {} (D);
    \path [-] (B) edge node[left] {} (D);
    \path [-] (B) edge node[left] {} (C);
    \path [-] (E) edge node[left] {} (B);
    \path [-] (E) edge node[left] {} (C);
    \path [-] (F) edge node[left] {} (E);
    \path [-] (F) edge node[left] {} (D);
    \path [-] (E) edge node[left] {} (D);
    
    \node[] at (1,1.73/3) {$M_n$};
    \node[] at (3,1.73/3) {$M_n^+$};
    \node[] at (2,1.73/3+1.73) {$M_n^-$};
\end{tikzpicture}
\end{center}
Thus
\begin{align*}
    (2\mathsf{id}_{n+1})^\circ=(\mathsf{id}_{n+1}+4\cdot 3^{n}(\delta_1+\delta_2+\delta_3))^\circ=\mathsf{id}_{n+1},
    \end{align*}
    where once again, the last equation follows from Dhar's identity test and this
shows that $\mathsf{id}_n$ as claimed  is the identity element of the sandpile group $\mathcal{R}_n$ with normal boundary conditions.
\end{proof}
\textbf{Remark.} Many of the toppling identities from \cite{chen-sandpile-2020}
can be extended to normal boundary conditions as in the proof of Theorem \ref{thm:id}. More precisely, we can use \cite[Proposition 3.8]{chen-sandpile-2020} three times, for recurrent configurations $\eta$ and their $120^{\circ}$ (counterclockwise and clockwise) rotations and bring the excess mass to the three boundary corners of the triangle.  Then we match the new configurations together at junction points. More precisely, let $\eta_n\in \Rn$ be any recurrent configuration on $G_n$ with normal boundary conditions, $\eta^+$, $\eta^-$ be its $120^{\circ}$ counterclockwise and clockwise rotations, respectively. Writing $\eta_n(\star,2)$ for a recurrent configuration with boundary values as
\begin{center}
$ \eta_n(\star,2):=$\quad
\begin{tikzpicture}[baseline=4.5ex]
    \node[shape=circle,draw=black] (A) at (0,0) {$\star$};
    \node[shape=circle,draw=black] (B) at (2,0) {$2$};
    \node[shape=circle,draw=black] (D) at (1,1.73) {$2$};
    
    \path [-] (A) edge node[left] {} (B);
    \path [-] (A) edge node[left] {} (D);
    \path [-] (B) edge node[left] {} (D);
    
    \node[] at (1,1.73/3) {$\eta_n$};
\end{tikzpicture}
\end{center}
where $\star$ represents an arbitrary number of chips that makes $\eta$ recurrent, then one can prove by induction over $n$ that the configuration $\eta_{n+1}$ on $G_{n+1}$ defined as 
\begin{center}
$\eta_{n+1}(\star):=$
    \begin{tikzpicture}[baseline=7ex, scale=0.8]
    \node[shape=circle,draw=black] (A) at (0,0) {$\star$};
    \node[shape=circle,draw=black] (B) at (2,0) {$2$};
    \node[shape=circle,draw=black] (C) at (4,0) {$\star$};
    \node[shape=circle,draw=black] (D) at (1,1.73) {$2$};
    \node[shape=circle,draw=black] (E) at (3,1.73) {$2$};
    \node[shape=circle,draw=black] (F) at (2,1.73*2) {$\star$} ;
    
    \path [-] (A) edge node[left] {} (B);
    \path [-] (A) edge node[left] {} (D);
    \path [-] (B) edge node[left] {} (D);
    \path [-] (B) edge node[left] {} (C);
    \path [-] (E) edge node[left] {} (B);
    \path [-] (E) edge node[left] {} (C);
    \path [-] (F) edge node[left] {} (E);
    \path [-] (F) edge node[left] {} (D);
    \path [-] (E) edge node[left] {} (D);
    
    \node[] at (1,1.73/3) {$\eta_{n}$};
    \node[] at (3,1.73/3) {$\eta^+_{n}$};
    \node[] at (2,1.73/3+1.73) {$\eta^-_{n}$};
\end{tikzpicture}
\end{center}
is also recurrent. We leave the details of this calculation to the reader. Moreover, the following  holds:
\begin{center}
 \begin{tikzpicture}[baseline=6ex,scale=0.7]
    \node[shape=circle,draw=black] (A) at (0,0) {$\star$};
    \node[shape=circle,draw=black] (B) at (2,0) {2};
    \node[shape=circle,draw=black] (C) at (4,0) {$\star$};
    \node[shape=circle,draw=black] (D) at (1,1.73) {2};
    \node[shape=circle,draw=black] (E) at (3,1.73) {2};
    \node[shape=circle,draw=black] (F) at (2,1.73*2) {$\star$} ;
    
    \path [-] (A) edge node[left] {} (B);
    \path [-] (A) edge node[left] {} (D);
    \path [-] (B) edge node[left] {} (D);
    \path [-] (B) edge node[left] {} (C);
    \path [-] (E) edge node[left] {} (B);
    \path [-] (E) edge node[left] {} (C);
    \path [-] (F) edge node[left] {} (E);
    \path [-] (F) edge node[left] {} (D);
    \path [-] (E) edge node[left] {} (D);
    
    \node[] at (1,1.73/3) {$\eta_{n}$};
    \node[] at (3,1.73/3) {$\eta^+_{n}$};
    \node[] at (2,1.73/3+1.73) {$\eta^-_{n}$};
\end{tikzpicture}+
 \begin{tikzpicture}[baseline=6ex,scale=0.7]
    \node[shape=circle,draw=black] (A) at (0,0) {$0$};
    \node[shape=rectangle,draw=black] (B) at (2,0) {$2\cdot 3^n$};
    \node[shape=circle,draw=black] (C) at (4,0) {$0$};
    \node[shape=rectangle,draw=black] (D) at (1,1.73) {$2\cdot 3^n$};
    \node[shape=rectangle,draw=black] (E) at (3,1.73) {$2\cdot 3^n$};
    \node[shape=circle,draw=black] (F) at (2,1.73*2) {$0$} ;
    
    \path [-] (A) edge node[left] {} (B);
    \path [-] (A) edge node[left] {} (D);
    \path [-] (B) edge node[left] {} (D);
    \path [-] (B) edge node[left] {} (C);
    \path [-] (E) edge node[left] {} (B);
    \path [-] (E) edge node[left] {} (C);
    \path [-] (F) edge node[left] {} (E);
    \path [-] (F) edge node[left] {} (D);
    \path [-] (E) edge node[left] {} (D);
    
    \node[] at (1,1.73/3) {$\mathbf{0}$};
    \node[] at (3,1.73/3) {$\mathbf{0}$};
    \node[] at (2,1.73/3+1.73) {$\mathbf{0}$};
\end{tikzpicture}
$\rightarrow$
    \begin{tikzpicture}[baseline=6ex, scale=0.9]
    \node[shape=rectangle,draw=black] (A) at (0,0) {$2\cdot 3^n+\star$};
    \node[shape=circle,draw=black] (B) at (2,0) {2};
    \node[shape=rectangle,draw=black] (C) at (4,0) {$2\cdot 3^n+\star$};
    \node[shape=circle,draw=black] (D) at (1,1.73) {2};
    \node[shape=circle,draw=black] (E) at (3,1.73) {2};
    \node[shape=rectangle,draw=black] (F) at (2,1.73*2) {$2\cdot 3^n+\star$} ;
    
    \path [-] (A) edge node[left] {} (B);
    \path [-] (A) edge node[left] {} (D);
    \path [-] (B) edge node[left] {} (D);
    \path [-] (B) edge node[left] {} (C);
    \path [-] (E) edge node[left] {} (B);
    \path [-] (E) edge node[left] {} (C);
    \path [-] (F) edge node[left] {} (E);
    \path [-] (F) edge node[left] {} (D);
    \path [-] (E) edge node[left] {} (D);
    
    \node[] at (1,1.73/3) {$\eta_{n}$};
    \node[] at (3,1.73/3) {$\eta^+_{n}$};
    \node[] at (2,1.73/3+1.73) {$\eta^-_{n}$};
\end{tikzpicture}
\\[2\baselineskip]$\rightarrow$
    \begin{tikzpicture}[baseline=6ex,scale=0.7]
    \node[shape=circle,draw=black] (A) at (0,0) {$\star$};
    \node[shape=circle,draw=black] (B) at (2,0) {2};
    \node[shape=circle,draw=black] (C) at (4,0) {$\star$};
    \node[shape=circle,draw=black] (D) at (1,1.73) {2};
    \node[shape=circle,draw=black] (E) at (3,1.73) {2};
    \node[shape=circle,draw=black] (F) at (2,1.73*2) {$\star$} ;
    
    \path [-] (A) edge node[left] {} (B);
    \path [-] (A) edge node[left] {} (D);
    \path [-] (B) edge node[left] {} (D);
    \path [-] (B) edge node[left] {} (C);
    \path [-] (E) edge node[left] {} (B);
    \path [-] (E) edge node[left] {} (C);
    \path [-] (F) edge node[left] {} (E);
    \path [-] (F) edge node[left] {} (D);
    \path [-] (E) edge node[left] {} (D);
    
    \node[] at (1,1.73/3) {$\eta_{n}$};
    \node[] at (3,1.73/3) {$\eta^+_{n}$};
    \node[] at (2,1.73/3+1.73) {$\eta^-_{n}$};
\end{tikzpicture}
\end{center}
So starting with a recurrent configuration $\eta_n$ on $G_n$ that has two chips at the lower right vertex and at the upper vertex, matching $\eta_n,\eta_n^+$ and $\eta_n^-$ in the rotated fashion, adding additional $2\cdot 3^n$ chips at the three junction points and stabilizing, leaves $\eta$ invariant. This is yet another easy exercise that is left to the reader.

\subsection{Sandpile group $\Rn$ with normal boundary conditions}

The formula \eqref{eq:sp-tr-rec} for the number of spanning trees of $G_n$ and its relation with the sandpile group $\Rn$ raises immediately the question of a similar factorization for $\Rn$. 
Given the recursive construction of $G_n$ obtrained by matching three copies of $G_{n-1}$ at junction points, it seems tempting to assume that this recursive structure carries over to the sandpile group $\mathcal{R}_n$ on $G_n$. However, this is unfortunately not the case. The fact that two copies of $G_{n-1}$ in $G_n$ interact at junction points makes things a bit more subtle. However, by ignoring what happens at and around cut points by modding out the equivalence class of the configurations that are $1$ at a cut point and $0$ everywhere else, we can give a recursive characterization of $\Rn\cong \Z^{V_n}\big\slash\triangle_n\Z^{V_n}:=\Gamma_n$ in terms of $\mathcal{R}_{n-1}$. A similar recursive characterization of the sandpile group was given on trees in \cite{Levine-sandpile-tree} and in \cite{toumpakari-sandpile}.

Given $G_n$, denote by $a_n,b_n$ and $c_n$ the three cut points where the three copies of $G_{n-1}$ meet, and by $x_n,y_n$ and $z_n$ the three corner vertices of $G_n$ as in the Figure \ref{fig:match-gn}. 

Denote by $\mathsf{u}^{\uparrow}\in \Z^{V_n}$ the vector indexed over the set $V_n$ of vertices of $G_n$ which equals $1$ at the two neighbors of $a_n$ in the upper copy of $G_{n-1}$ and $0$ everywhere else, let $\mathsf{u}^{\leftarrow}\in \Z^{V_n}$ be the vector which equals to $1$ at the two neighbors of $c_n$ in the left copy of $G_{n-1}$ and $0$ everywhere else. Finally, denote by $\mathsf{u}^{\rightarrow}\in \Z^{V_n}$ the vector which equals $1$ at the two neighbors of $b_n$ in the right copy (below $b_n$)  of $G_{n-1}$ and $0$ everywhere else.

For $\mathsf{u}\in \Z^{V_n}$, we write  $[\mathsf{u}]$ for the equivalence class of $\mathsf{u}$ and  $\langle \mathsf{u} \rangle $ for the cyclic subgroup of $\Gamma_n$ generated by $[\mathsf{u}]$. Then we have the following characterization.
\begin{theorem}\label{thm:sand-group}
For the sandpile group $\Rn\cong \Z^{V_n}\big\slash\triangle_n\Z^{V_n}=\Gamma_n$ of $G_n$ with normal boundary conditions, for every $n\in \mathbb{N}$ , we have the following isomorphism
\begin{align*}
    \Gamma_n\Big\slash\Big\langle[\mathsf{u}^{\uparrow}],[\mathsf{u}^{\leftarrow}],[\mathsf{u}^{\rightarrow}],[\delta_{a_n}],[\delta_{b_n}],[\delta_{c_n}]\Big\rangle
    \cong \Gamma^{\uparrow}_{n-1}\oplus \Gamma^{\leftarrow}_{n-1} \oplus \Gamma^{\rightarrow}_{n-1},
    \end{align*}
where $\triangle_n$ denotes the reduced  Laplacian of $G_n$ and 
\begin{align*}
\Gamma^{\uparrow}_{n-1}&:=\Gamma_{n-1}\big\slash\langle[\delta_{x_{n-1}}],[\delta_{y_{n-1}}]\rangle\\
\Gamma^{\leftarrow}_{n-1}&:=\Gamma_{n-1}\big\slash\langle[\delta_{y_{n-1}}],[\delta_{z_{n-1}}]\rangle\\
\Gamma^{\rightarrow}_{n-1}&:=\Gamma_{n-1}\big\slash\langle[\delta_{z_{n-1}}],[\delta_{x_{n-1}}]\rangle.
\end{align*}
\end{theorem}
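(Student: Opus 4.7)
The plan is to view the quotient on the left as the sandpile group of $G_n$ obtained by promoting $a_n,b_n,c_n$ to additional sinks, which disconnects $G_n$ into its three $G_{n-1}$-subcopies. The key algebraic tool, valid for any non-sink vertex $v$ of a finite connected graph $G$ with reduced Laplacian $\triangle$ on vertex set $V$, is the identity
$$\Z^V \big/ \bigl(\triangle\, \Z^V + \Z \delta_v \bigr) \;\cong\; \Z^{V \setminus \{v\}} \big/ \bigl(\triangle^{(+v)}\, \Z^{V \setminus \{v\}} + \Z \mathsf{u}^v \bigr),$$
where $\triangle^{(+v)}$ is $\triangle$ with the row and column at $v$ deleted (i.e., the reduced Laplacian after merging $v$ into the sink), and $\mathsf{u}^v := \sum_{w \sim v} \delta_w$ is the indicator of $v$'s neighbors. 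This follows by applying the projection $\Z^V \twoheadrightarrow \Z^{V \setminus \{v\}}$ that zeros the $v$-coordinate: every row of $\triangle$ other than that of $v$ projects onto the corresponding row of $\triangle^{(+v)}$, while the row of $v$ itself projects to $-\mathsf{u}^v$ and contributes the extra generator.

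Iterating this lemma for $v = a_n, b_n, c_n$ yields
$$\Gamma_n \big/ \bigl\langle [\delta_{a_n}], [\delta_{b_n}], [\delta_{c_n}] \bigr\rangle \;\cong\; \widetilde{\Gamma}_n \big/ \bigl\langle [\mathsf{u}^{a_n}], [\mathsf{u}^{b_n}], [\mathsf{u}^{c_n}] \bigr\rangle,$$
where $\widetilde{\Gamma}_n$ is the sandpile group of $G_n$ with the three cut points turned into sinks. Because the three $G_{n-1}$-subcopies meet in $G_n$ only through these cut points, their removal disconnects the graph and the corresponding reduced Laplacian is block-diagonal with one block per subcopy. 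Hence $\widetilde{\Gamma}_n$ splits as a direct sum of three sandpile groups, each attached to a $G_{n-1}$-copy whose two cut-corners have been absorbed into the sink.

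Next I fold in the remaining quotients by $[\mathsf{u}^{\uparrow}], [\mathsf{u}^{\leftarrow}], [\mathsf{u}^{\rightarrow}]$. Each full cut-point indicator decomposes as $\mathsf{u}^{a_n} = \mathsf{u}^{\uparrow} + \mathsf{u}^{a_n,\mathrm{opp}}$, where the second summand is supported on $a_n$'s two neighbors in the opposite (non-upper) subcopy. Since both $[\mathsf{u}^{a_n}]$ and $[\mathsf{u}^{\uparrow}]$ vanish in the quotient, so does $[\mathsf{u}^{a_n,\mathrm{opp}}]$, and analogously for $b_n$ and $c_n$. Therefore the full left-hand side equals the direct sum of three groups further quotiented by all six half-indicators. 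Crucially, each half-indicator is supported inside exactly one subcopy, so the direct-sum decomposition is preserved by this additional quotient: every summand receives precisely the two half-indicators at its own two cut-corners.

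To conclude, I apply the initial lemma in reverse inside each summand: quotienting the sandpile group of a $G_{n-1}$-with-two-cut-corners-sunk by the two corresponding half-indicators is isomorphic to quotienting $\Gamma_{n-1}$ by $[\delta_{\bullet}]$ at the same two corners of $G_{n-1}$. Matching each of the three subcopies with the correct pair of corners (upper copy with $x_{n-1},y_{n-1}$; left copy with $y_{n-1},z_{n-1}$; right copy with $z_{n-1},x_{n-1}$) reproduces exactly $\Gamma^{\uparrow}_{n-1} \oplus \Gamma^{\leftarrow}_{n-1} \oplus \Gamma^{\rightarrow}_{n-1}$. I expect the main technical hurdle to be the careful bookkeeping of the labeling conventions, namely fixing at the outset which cut point of $G_n$ is shared by which pair of subcopies and verifying that each half-indicator indeed sits inside a single subcopy and corresponds to the claimed pair of corners in the $G_{n-1}$-reference frame; once this labeling is set, each step above is essentially routine.
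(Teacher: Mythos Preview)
Your argument is correct and takes a genuinely different route from the paper. The paper builds an explicit surjection $p=(p^{\uparrow},p^{\leftarrow},p^{\rightarrow}):\Gamma_n\to\Gamma^{\uparrow}_{n-1}\oplus\Gamma^{\leftarrow}_{n-1}\oplus\Gamma^{\rightarrow}_{n-1}$ by restricting configurations to each subcopy, checks well-definedness, and then computes the kernel by hand, showing that any $\eta$ in the kernel differs from some $\triangle_n\mathsf{A}$ only at cut points and their neighbours, with the neighbour-discrepancies forced to be multiples of the half-indicators. You instead isolate a reusable ``vertex-sinking'' lemma, $\Z^V/(\triangle\Z^V+\Z\delta_v)\cong\Z^{V\setminus\{v\}}/(\triangle^{(+v)}\Z^{V\setminus\{v\}}+\Z\mathsf{u}^v)$, apply it forward to sink $a_n,b_n,c_n$ and obtain the block-diagonal $\widetilde{\Gamma}_n$, then split the six half-indicators among the three summands and apply the lemma backward in each summand to un-sink the two cut-corners.

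The paper's approach is shorter and needs no auxiliary lemma, but the kernel computation is somewhat ad hoc. Your approach is more structural: it explains \emph{why} precisely the generators $[\mathsf{u}^{\uparrow}],[\mathsf{u}^{\leftarrow}],[\mathsf{u}^{\rightarrow}]$ and $[\delta_{a_n}],[\delta_{b_n}],[\delta_{c_n}]$ appear---they are exactly the correction terms produced by the sinking lemma---and the lemma itself would transfer to other self-similar graphs glued at cut vertices. One point to make explicit when you write it up: when iterating the lemma you must track how the earlier $\mathsf{u}^{\bullet}$'s project under the later coordinate deletions; for $n\geq 2$ the cut points are pairwise non-adjacent so nothing changes, while for $n=1$ the projections shrink but the identity $\mathsf{u}^{a_n}=\mathsf{u}^{\uparrow}+\mathsf{u}^{a_n,\mathrm{opp}}$ survives the projection and the argument still goes through.
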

\begin{figure}
\centering
$G_n=$
    \begin{tikzpicture}[baseline=6ex, scale=0.8]
    \node[shape=circle,draw=black] (A) at (0,0) {$x_n$};
    \node[shape=circle,draw=black] (B) at (2,0) {$c_n$};
    \node[shape=circle,draw=black] (C) at (4,0) {$y_n$};
    \node[shape=circle,draw=black] (D) at (1,1.73) {$a_n$};
    \node[shape=circle,draw=black] (E) at (3,1.73) {$b_n$};
    \node[shape=circle,draw=black] (F) at (2,1.73*2) {$z_n$} ;
    
    \path [-] (A) edge node[left] {} (B);
    \path [-] (A) edge node[left] {} (D);
    \path [-] (B) edge node[left] {} (D);
    \path [-] (B) edge node[left] {} (C);
    \path [-] (E) edge node[left] {} (B);
    \path [-] (E) edge node[left] {} (C);
    \path [-] (F) edge node[left] {} (E);
    \path [-] (F) edge node[left] {} (D);
    \path [-] (E) edge node[left] {} (D);
    
    \node[] at (1,1.73/3) {$G_{n-1}$};
    \node[] at (3,1.73/3) {$G_{n-1}$};
    \node[] at (2,1.73/3+1.73) {$G_{n-1}$};
\end{tikzpicture}
\caption{The corner points and the junction points of $G_n$.}
\label{fig:match-gn}
\end{figure}
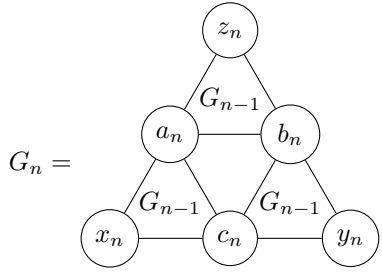
\begin{proof}
First of all, we write the vertex set $V_n$ as the union of 
$$V_n=V_n^{\uparrow}\cup V_n^{\leftarrow}\cup V_n^{\rightarrow}$$ where 
$V_n^{\uparrow}$ (respectively $V_n^{\leftarrow}$ and $V_n^{\rightarrow}$) represents the triangle (as a subset of vertices) of $V_n$  with three corner vertices $\{a_n,b_n,z_n\}$  (respectively $\{x_n,c_n,a_n\}$ and $\{c_n,y_n,b_n\}$), that is, graphically:
\begin{center}
    \begin{tikzpicture}[baseline=6ex, scale=0.8]
    \node[shape=circle,draw=black] (A) at (0,0) {$x_n$};
    \node[shape=circle,draw=black] (B) at (2,0) {$c_n$};
    \node[shape=circle,draw=black] (C) at (4,0) {$y_n$};
    \node[shape=circle,draw=black] (D) at (1,1.73) {$a_n$};
    \node[shape=circle,draw=black] (E) at (3,1.73) {$b_n$};
    \node[shape=circle,draw=black] (F) at (2,1.73*2) {$z_n$} ;
    
    \path [-] (A) edge node[left] {} (B);
    \path [-] (A) edge node[left] {} (D);
    \path [-] (B) edge node[left] {} (D);
    \path [-] (B) edge node[left] {} (C);
    \path [-] (E) edge node[left] {} (B);
    \path [-] (E) edge node[left] {} (C);
    \path [-] (F) edge node[left] {} (E);
    \path [-] (F) edge node[left] {} (D);
    \path [-] (E) edge node[left] {} (D);
    
    \node[] at (1,1.73/3) {$V_n^{\leftarrow}$};
    \node[] at (3,1.73/3) {$V_n^{\rightarrow}$};
    \node[] at (2,1.73/3+1.73) {$V_n^{\uparrow}$};
\end{tikzpicture}
\end{center}
Notice that $V_n^{\uparrow},V_n^{\leftarrow},V_n^{\rightarrow}$ are not mutually disjoint subsets of vertices and their pairwise intersection points are the cut points (junction points).
We define three mappings  $p^{\uparrow},p^{\leftarrow},p^{\rightarrow}$ as following. For every $n\in\N$ let 
\begin{align*}
    & p^{\uparrow}:\Gamma_n\rightarrow\Gamma_{n-1}\big\slash\langle[\delta_{x_{n-1}}],[\delta_{y_{n-1}}]\rangle\\
    & p^{\uparrow}([\eta])=[\eta|_{V^{\uparrow}}].
\end{align*}
We check first that $p^{\uparrow}$ is well-defined. Let $\eta_1,\eta_2\in\Z^{V_n}$ be integer valued vectors indexed over $V_n$ such that there exists $\mathsf{x}\in\Z^{V_n}$ with $\eta_1=\eta_2+\triangle_n \mathsf{x}$.
We have to show now that  $[\eta_1|_{V^{\uparrow}}]=[\eta_2|_{V^{\uparrow}}].$ If we restrict the vector $\triangle_n \mathsf{x}$ indexed over $V_n$ to the vertices of $V^{\uparrow}$ and denote its restriction by $( \triangle_n \mathsf{x})|_{V^{\uparrow}}$, then we have
\begin{align*}
 (\triangle_n\mathsf{x})|_{V^{\uparrow}}=(\triangle_{n-1}\mathsf{x})|_{V^{\uparrow}}-\Big(\sum_{y\sim a_n,y\notin V^{\uparrow}}\mathsf{x}_y\Big)\delta_{\mathsf{x}_{n-1}}-\Big(\sum_{y\sim b_n,y\notin V^{\uparrow}}\mathsf{x}_y\Big)\delta_{y_{n-1}},
\end{align*}
where $\mathsf{x}_y$ represents the entry of $\mathsf{x}$ corresponding to (indexed after) vertex $y$. Then
\begin{align*}
    p^{\uparrow}([\eta_1])&=[\eta_1|_{V^{\uparrow}}]=\Bigg[\eta_2|_{V^{\uparrow}}+\triangle_{n-1}\mathsf{x}|_{V^{\uparrow}}-\Big(\sum_{y\sim a_n,y\notin V^{\uparrow}}\mathsf{x}_y\Big)\delta_{x_{n-1}}-\Big(\sum_{y\sim b_n,y\notin V^{\uparrow}}\mathsf{x}_y\Big)\delta_{y_{n-1}}\Bigg]\\
   & =[\eta_2|_{V^{\uparrow}}]=p^{\uparrow}([\eta_2])
\end{align*}
which shows well-definiteness of $p^{\uparrow}$. It is also easy to see that $p^{\uparrow}$ is a homomorphism, since for 
$\eta_1,\eta_2\in\Z^{V_n}$ it holds
\begin{align*}
    p^{\uparrow}([\eta_1+\eta_2])=[(\eta_1+\eta_2)|_{V^{\uparrow}}]=[\eta_1|_{V^{\uparrow}}+\eta_2|_{V^{\uparrow}}]=[\eta_1|_{V^{\uparrow}}]+[\eta_2|_{V^{\uparrow}}]=p^{\uparrow}([\eta_1])+p^{\uparrow}([\eta_2]).
\end{align*}
In the similar way we define the other two mappings $p^{\leftarrow}, p^{\rightarrow}$ by considering the restrictions of $G_n$ on the left and on the right copy of $G_{n-1}$ respectively, that is, for every $n\in\N$, let 
\begin{align*}
    & p^{\leftarrow}:\Gamma_n\rightarrow\Gamma_{n-1}\big\slash\langle[\delta_{y_{n-1}}],[\delta_{z_{n-1}}]\rangle\quad \text{defined as}\quad  p^{\leftarrow}([\eta])=[\eta|_{V^{\leftarrow}}]\\
    & p^{\rightarrow}:\Gamma_n\rightarrow\Gamma_{n-1}\big\slash\langle[\delta_{z_{n-1}}],[\delta_{x_{n-1}}]\rangle\quad \text{defined as}\quad  p^{\rightarrow}([\eta])=[\eta|_{V^{\rightarrow}}]\\
\end{align*}
which are both well-defined and homomorphisms. Let now 
$$p:\Gamma_n\rightarrow \Gamma^{\uparrow}_{n-1}\oplus \Gamma^{\leftarrow}_{n-1} \oplus \Gamma^{\rightarrow}_{n-1}\quad \text{defined as }\quad p=(p^{\uparrow},p^{\leftarrow},p^{\rightarrow}),$$
which is again a homomorphism. 
The mapping $p$ is also surjective, since for $\mathsf{x},\mathsf{y},\mathsf{z}\in\Z^{V_{n-1}}$, at the two corner points we mod out in the image set. More precisely, take $\mathsf{x}$ to be $0$ in the right and upper corner, $\mathsf{y}$ to be $0$ in the left and upper corner, and $\mathsf{z}$ to be $0$ in the lower two corners. Then we can find a suitable preimage by setting it $0$ at the three cut points and giving it the same values as $\mathsf{x}$ in the lower left copy, as $\mathsf{y}$ in the lower right copy and as $\mathsf{z}$ in the upper copy of $G_n$.

Finding the kernel of $p$ and using the isomorphism theorem for groups implies that the image of $p$ is isomorphic to the quotient group $\Gamma_n\slash \mathsf{Ker}(p)$. In order to get the claim, we have therefore to show that $\mathsf{Ker}(p)=\Big\langle[\mathsf{u}^{\uparrow}],[\mathsf{u}^{\leftarrow}],[\mathsf{u}^{\rightarrow}],[\delta_{a_n}],[\delta_{b_n}],[\delta_{c_n}]\Big\rangle$.
Let $\eta\in\Gamma_n$ with $p([\eta])=([\mathsf{0}],[\mathsf{0}],[\mathsf{0}])$. Then, there exist integers $d_1,d_2,e_1,e_2,f_1,f_2$ and vectors $\mathsf{x},\mathsf{y},\mathsf{z}\in\Z^{V_{n-1}}$ such that
\begin{align*}
    \eta|_{V^{\uparrow}}&=\triangle_{n-1}\mathsf{z}+d_1\delta_{x_{n-1}}+d_2\delta_{y_{n-1}},\\
     \eta|_{V^{\leftarrow}}&=\triangle_{n-1}\mathsf{x}+f_1\delta_{z_{n-1}}+f_2\delta_{y_{n-1}},\\
    \eta|_{V^{\rightarrow}}&=\triangle_{n-1}\mathsf{y}+e_1\delta_{x_{n-1}}+e_2\delta_{z_{n-1}}.
\end{align*}
Denote by $\mathsf{x}'$ the vector in $\Z^{V_n}$ that equals $\mathsf{x}$ on $V^{\leftarrow}$ and $0$ everywhere else. Similarly denote by $\mathsf{y}'$ the vector equal to $\mathsf{y}$ on $V^{\rightarrow}$ and $0$ elsewhere. Finally $\mathsf{z}'$ equals $\mathsf{z}$ on the vertices in $V^{\uparrow}$ and $0$ elsewhere. Let $\mathsf{A}=\mathsf{x}'+\mathsf{y}'+\mathsf{z}'$. Then for any vertex $v\in V_n$ that is neither a cut point nor a neighbour of a cut point we have $\eta(v)=\triangle_n \mathsf{A}(v)$. Let now $u,v\in V^{\uparrow}$ with $u\sim a_n$ and $v\sim a_n$. Then
\begin{align*}
    \triangle_n \mathsf{A}(u)=\triangle_{n-1}\mathsf{z}(u)-\mathsf{x}'(a_n)=\eta(u)-\mathsf{x}'(a_n)\\
    \triangle_n \mathsf{A}(v)=\triangle_{n-1}\mathsf{z}(v)-\mathsf{x}'(a_n)=\eta(v)-\mathsf{x}'(a_n),
\end{align*}
which shows that $\eta$ and $\Delta_n\mathsf{A}$ differ at $u$ and $v$ by the same amount, which is $\mathsf{x}(a_n)$.
Moreover
\begin{align*}
    \triangle_n\delta_{a_n}-4\delta_{a_n}-\mathsf{u}^{\uparrow}=\delta_{u'}+\delta_{v'},
\end{align*}
where $u',v'\in V^{\leftarrow}$, and $u'\sim a_n, v'\sim a_n $, hence we obtain
\begin{align*}
    \delta_{u'}+\delta_{v'}\in\langle[\mathsf{u}^{\uparrow}],[\mathsf{u}^{\leftarrow}],[\mathsf{u}^{\rightarrow}],[\delta_{a_n}],[\delta_{b_n}],[\delta_{c_n}]\big\rangle.
\end{align*}
Doing now the same calculations for the other two cut points $b_n,c_n$ we see
\begin{align*}
    \eta-\triangle_n \mathsf{A}\in\big\langle[\mathsf{u}^{\uparrow}],[\mathsf{u}^{\leftarrow}],[\mathsf{u}^{\rightarrow}],[\delta_{a_n}],[\delta_{b_n}],[\delta_{c_n}]\big\rangle.
\end{align*}
Now a simple calculation shows that $\big\langle[\mathsf{u}^{\uparrow}],[\mathsf{u}^{\leftarrow}],[\mathsf{u}^{\rightarrow}],[\delta_{a_n}],[\delta_{b_n}],[\delta_{c_n}]\big\rangle$ is being mapped to $([\mathsf{0}],[\mathsf{0}],[\mathsf{0}])$, thus we have 
\begin{align*}
    \big\langle[\mathsf{u}^{\uparrow}],[\mathsf{u}^{\leftarrow}],[\mathsf{u}^{\rightarrow}],[\delta_{a_n}],[\delta_{b_n}],[\delta_{c_n}]\big\rangle=\mathsf{Ker}(p)
\end{align*}
and this completes the proof.
\end{proof}

\subsection{Mixing time on Sierpi\'nski gasket graphs}

In this part we show that the mixing time of the sandpile Markov chain on Sierpi\'nski gasket graphs
$G_n=(V_n,E_n)$, $n\in \mathbb{N}$ with normal boundary conditions, is of order $|V_n|\log|V_n|$. For every $n\in\N$, we write $\mathsf{P}_n$ for the transition matrix of the sandpile chain over the sandpile group $\mathcal{R}_n=\mathcal{R}(G_n)$, $\pi_n$ for the uniform distribution on $\mathcal{R}_n$ and $\lambda^{\star}_n$ (respectively $\gamma^{\star}_n$ and $t_{\mathsf{rel}}^n$) for subdominant eigenvalue (respectively spectral gap and relaxation time) of $\mathsf{P}_n$. In order to simplify notation, we will also write $\mathsf{P}^t_{\mathsf{id}_n}:=\mathsf{P}_n^t\delta_{\mathsf{id}_n}$ for the distribution of the chain at time $t$ when it starts at the identity $\mathsf{id}_n$ of $\mathcal{R}_n$. In \cite[Section 2.3]{abel-sand-mix-pike-levine-jerison} the graphs $G_n$ were also considered, and the authors calculated the order of the relaxation time by employing a technique called {\it gadgets} and constructing a suitable multiplicative harmonic function. Together with \cite[Proposition 2.10]{abel-sand-mix-pike-levine-jerison}, the following bound for the mixing time can be obtained:
$$c|V_n|\leq t_{\mathsf{mix}}\leq C |V_n|\log|\mathcal{R}_n|$$
for constants $c,C>0$. Using the number of spanning trees $|\mathcal{R}_n|$ of $G_n$ given by  \eqref{eq:sp-tr-rec}, one gets that the order of the mixing time is between $|V_n|$ and $|V_n|^2$. We improve their bound, by showing that the mixing time is of order $|V_n|\log|V_n|$. 
While the upper bound follows directly from \cite[Theorem 4.3]{abel-sand-mix-pike-levine-jerison}, for the lower bound we use the approach the authors used in order to show cutoff for the sandpile chain on complete graphs. More precisely, we consider a {\it distinguishing statistic $\chi$} for which the distance between the pushforward measure $\mathsf{P}^t_{\mathsf{id}_n}\circ \chi^{-1}$ and $\pi_n\circ \chi^{-1}$ can be bounded from below. By \cite[Proposition 7.8]{peres-mixing-book}, for any $\chi:\mathcal{R}_n\to\mathbb{R}$:
$$\|\mathsf{P}^t_{\mathsf{id}_n}-\pi_n\|_{\mathsf{TV}}\geq 1-\frac{4}{4+R(t)}\quad \text{whenever}\quad R(t)\leq \dfrac{2\Big(\mathbb{E}_{\mathsf{P}^t_{\mathsf{id}_n}}[\chi]-\mathbb{E}_{\pi_n}[\chi]\Big)^2}{|\mathsf{Var}_{\mathsf{P}^t_{\mathsf{id}_n}}[\chi]-\mathsf{Var}_{\pi_n}[\chi]|}.$$

\begin{theorem}\label{thm:mix-time}
The order of the mixing time for the sandpile Markov chain on $G_n$, with $n\geq 2$, with  normal boundary conditions is given by:
$$t_{\mathsf{mix}}=O(n\cdot 3^n).$$
\end{theorem}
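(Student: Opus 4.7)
I would prove the theorem by establishing matching upper and lower bounds of order $|V_n|\log|V_n|=\Theta(n\cdot 3^n)$.

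For the upper bound, I would combine two ingredients. The gadget construction carried out on $G_n$ in \cite[Section 2.3]{abel-sand-mix-pike-levine-jerison} produces a multiplicative harmonic function controlling $\lambda^{\star}_n$ and yields the relaxation-time estimate $t^n_{\mathsf{rel}}=O(|V_n|)$. I would then invoke \cite[Theorem 4.3]{abel-sand-mix-pike-levine-jerison}, which provides a general spectral upper bound for sandpile chains of the form $t_{\mathsf{mix}}\leq C\cdot t_{\mathsf{rel}}\log|V|$. Multiplying the two gives $t_{\mathsf{mix}}=O(|V_n|\log|V_n|)=O(n\cdot 3^n)$.

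For the lower bound, my plan is to apply the distinguishing-statistic inequality \cite[Proposition 7.8]{peres-mixing-book} quoted above, following the cutoff argument for the sandpile chain on complete graphs in \cite{abel-sand-mix-pike-levine-jerison}. The basic building blocks are the characters $\chi_h$ associated with multiplicative harmonic functions $h\in\mathcal{H}$. Using that $\chi_h$ is an eigenfunction of $\mathsf{P}_n$ with eigenvalue $\lambda_h$ and is orthogonal to the trivial character under $\pi_n$, one obtains the explicit identities $\mathbb{E}_{\pi_n}[\chi_h]=0$ and $\mathbb{E}_{\mathsf{P}^t_{\mathsf{id}_n}}[\chi_h]=\lambda_h^t$, together with analogous formulas for $\chi_{h^2}$; these give the mean and variance differences appearing in Proposition 7.8 in closed form.

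The main obstacle is extracting the logarithmic factor. Because $|\chi_h|=1$, a single character cannot supply a ratio $\frac{2(\mathbb{E}_{\mathsf{P}^t_{\mathsf{id}_n}}[\chi]-\mathbb{E}_{\pi_n}[\chi])^2}{|\mathsf{Var}_{\mathsf{P}^t_{\mathsf{id}_n}}[\chi]-\mathsf{Var}_{\pi_n}[\chi]|}$ that grows with $|V_n|$, and therefore produces at best a lower bound of order $t^n_{\mathsf{rel}}=O(|V_n|)$. To recover the missing $\log|V_n|$ factor I would exploit the self-similar structure of $G_n$: the gadget at scale $k$ can be transported to each of the $3^{n-k}$ embedded copies of $G_k$ inside $G_n$, supplying a large family of multiplicative harmonic functions whose eigenvalues all cluster near $\lambda^{\star}_n$. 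Aggregating these into a single real-valued statistic $\chi$ and carefully controlling the cross terms $\lambda_{h_ih_j}^t$ arising in its variance is the delicate core of the argument; once executed, Proposition 7.8 applied to this aggregated $\chi$ should yield the desired lower bound of order $|V_n|\log|V_n|$.
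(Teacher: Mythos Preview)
Your proposal is essentially the same strategy as the paper's proof. Two small points of divergence worth noting: for the upper bound, \cite[Theorem 4.3]{abel-sand-mix-pike-levine-jerison} is applied directly (it is a bounded-degree result, here $d=4$) and yields $t\geq \tfrac{5}{4}(|V_n|+1)\log(34|V_n|)\Rightarrow \|\mathsf{P}^t_{\mathsf{id}_n}-\pi_n\|_2\leq 1/4$ without any separate appeal to $t_{\mathsf{rel}}$; and for the lower bound, the paper does not transport the gadget of \cite[Section~2.3]{abel-sand-mix-pike-levine-jerison} but instead uses the simplest possible scale, $k=1$, with the explicit $\{\pm 1\}$-valued multiplicative harmonic function $h_1$ on $G_1$ (value $-1$ at the three inner vertices, $1$ at the corners). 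This choice is what makes your ``delicate core'' painless: $(\chi_n^i)^2\equiv 1$, the functions $h_n^i$ are equal to $1$ off disjoint copies of $G_1$ so $\lambda_n^{i,j}=1-\tfrac{12}{|V_n|+1}$ for $i\neq j$, and the cross terms in $\mathsf{Var}_{\mathsf{P}^t_{\mathsf{id}_n}}[\chi]$ are then dominated via the elementary inequality $(1-\tfrac{6}{|V_n|+1})^2\geq 1-\tfrac{12}{|V_n|+1}$, giving $R(t)=3^{n-1}(1-\tfrac{6}{|V_n|+1})^{2t}$ and hence the $\tfrac{|V_n|}{12}\log|V_n|$ lower bound.
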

\begin{proof}{\it The upper bound} follows immediately from \cite[Theorem 4.3]{abel-sand-mix-pike-levine-jerison}, since $G_n$ is a regular graph with degree $4$, and so
\begin{equation}\label{eq:up-bnd-mix}
\|\mathsf{P}^t_{\mathsf{id_n}}-\pi_n\|_2\leq \frac{1}{4}\quad \text{ for all }\quad  t\geq \frac{5}{4}(|V_n|+1)\log(34|V_n|)
\end{equation}
which implies 
$$t_{\mathsf{mix}}\leq C |V_n|\log|V_n|=Cn 3^n$$
for some constant $C>0$.

{\it The lower bound.} To obtain a matching lower bound, we use eigenfunctions and an adequate choice of a multiplicative harmonic function. Consider the function $h_1:V_1\to\mathbb{T}$ on the vertices of $G_1$ defined as
\begin{center}
        $h_1=$\begin{tikzpicture}[baseline=7ex,scale=0.6]
    \node[shape=circle,draw=black] (A) at (0,0) {1};
    \node[shape=circle,draw=black] (B) at (2,0) {-1};
    \node[shape=circle,draw=black] (C) at (4,0) {1};
    \node[shape=circle,draw=black] (D) at (1,1.73) {-1};
    \node[shape=circle,draw=black] (E) at (3,1.73) {-1};
    \node[shape=circle,draw=black] (F) at (2,1.73*2) {1} ;
    
    \path [-] (A) edge node[left] {} (B);
    \path [-] (A) edge node[left] {} (D);
    \path [-] (B) edge node[left] {} (D);
    \path [-] (B) edge node[left] {} (C);
    \path [-] (E) edge node[left] {} (B);
    \path [-] (E) edge node[left] {} (C);
    \path [-] (F) edge node[left] {} (E);
    \path [-] (F) edge node[left] {} (D);
    \path [-] (E) edge node[left] {} (D);
    
    \node[] at (1,1.73/3) {};
    \node[] at (3,1.73/3) {};
    \node[] at (2,1.73/3+1.73) {};
\end{tikzpicture}
\end{center}
Obviously $h_1$ is multiplicative harmonic on $G_1$. The graph $G_n$ contains $3^{n-1}$ copies of $G_1$. We can extend the function $h_1$ from $G_1$ to a multiplicative harmonic function on $G_n$ as following: choose one of those $3^{n-1}$ copies of $G_1$ in $G_n$, set it to be $h_1$ on this copy, and extend it constantly equal to $1$ on the rest of $G_n$. Then it is obvious that this also is multiplicative harmonic on $G_n$ and the corresponding eigenvalue is $1-\frac{6}{|V_n|+1}$. Order the $3^{n-1}$ subcopies of $G_1$ in $G_n$, and for each $i\in\{1,\ldots,3^{n-1}\}$ denote by $h_n^i$ the multiplicative harmonic function on $G_n$ which equals $h_1$ on the $i$-th copy of $G_1$, and $1$ elsewhere. Let $\chi_n^i$ be the character of $\Gamma_n\cong \mathcal{H}$ corresponding to $h_n^i$, which is in view of \eqref{eq:char-gamma} given by $\chi_n^i(\eta)=\prod_{v\in V_n}\big(h_n^i(v)\big)^{\eta(v)}$, for $\eta\in \mathcal{R}_n$. Using the characters $\chi_n^i$, we consider the {\it distinguishing statistic} $\chi:\mathcal{R}_n\to\mathbb{R}$ given by
$$\chi(\eta):=\frac{1}{3^{n-1}}\sum_{i=1}^{3^{n-1}}\chi_n^i(\eta),$$
which is also in the eigenspace of the eigenvalue $1-\frac{6}{|V_n|+1}$, so $\langle \pi_n, \chi\rangle =0$.
It remains to investigate the expectation and variance of $\chi$ under the distribution $\mathsf{P}^t_{\mathsf{id}_n}$ of the sandpile chain at time $t$ and under stationarity $\pi_n$, respectively. Since $\chi(\mathsf{id}_n)=1$ and $\mathsf{P}_n\chi=\big(1-\frac{6}{|V_n|+1}\big)\chi$, we first have 
$$\mathbb{E}_{\mathsf{P}^t_{\mathsf{id}_n}}[\chi]=\Big(1-\frac{6}{|V_n|+1}\Big)^t\quad \text{and}\quad \mathbb{E}_{\pi_n}[\chi]=0 \quad \text{and}\quad \mathsf{Var}_{\pi_n}[\chi]=\frac{1}{3^{n-1}}.$$
The values for the expectation  $\mathbb{E}_{\pi_n}[\chi]$ and variance $\mathsf{Var}_{\pi_n}[\chi]=\frac{1}{3^{n-1}}$ under stationarity used the fact that $\pi_n$ is a left eigenfunction of $\mathsf{P}_n$ with eigenvalue $1\neq \lambda_{h_n^i}=1-\frac{6}{|V_n|+1}$, since it is the stationary distribution, so for the upper-bound in the numerator of $R(t)$
we have
$$2\Big(\mathbb{E}_{\mathsf{P}^t_{\mathsf{id}_n}}[\chi]-\mathbb{E}_{\pi_n}[\chi]\Big)^2=2\Big(1-\frac{6}{|V_n|+1}\Big)^{2t}.$$
It remains to compute an upper bound for the value in the denominator. In order to calculate $\text{Var}_{P^t_{\text{id}_n}}[\chi]$, we consider for $i,j\in\{1,...,3^{n-1}\}$, with $i\neq j$ the function $\chi^{i,j}_n=\chi_n^i\cdot\chi_n^j$ which is again a character on $\Gamma_n$. If we denote by  $\lambda_n^{i,j}$ the corresponding eigenvalue, then
\begin{align*}
    \lambda_n^{i,j}&=\frac{1}{|V_n|+1}\sum_{v\in V_n\cup\{s\}}\chi_n^{i,j}(\delta_v)
    =\frac{1}{|V_n|+1}\sum_{v\in V_n\cup\{s\}}h_n^i(v)h_n^j(v)
    =1-\frac{12}{|V_n|+1}.
\end{align*}
Notice that $(\chi_n^i)^2$ is the constant $1$ function. When plugging this into the formula for the variance we obtain
\begin{align*}
    \mathsf{Var}_{\mathsf{P}^t_{\mathsf{id}_n}}[\chi]&=\frac{1}{3^{2n-2}}\mathbb{E}_{\mathsf{P}^t_{\mathsf{id}_n}}\Big[\sum_{i=1}^{3^{n-1}}(\chi^i_n)^2\Big]+\frac{1}{3^{2n-2}}\mathbb{E}_{\mathsf{P}^t_{\mathsf{id}_n}}\Bigg[\sum_{i\neq j}\chi_n^{i,j}\Big]-\Bigg(1-\frac{6}{|V_n|+1}\Bigg)^{2t}\\
    &=\frac{1}{3^{n-1}}+\bigg(1-\frac{1}{3^{n-1} }\bigg)\bigg(1-\frac{12}{|V_n|+1}\bigg)^t-\Bigg(1-\frac{6}{|V_n|+1}\Bigg)^{2t}.
\end{align*}
Since  $\Big(1-\frac{6}{|V_n|+1}\Big)^{2}\geq\Big(1-\frac{12}{|V_n|+1}\Big)$ it follows
\begin{align*}
    \big|\mathsf{Var}_{\mathsf{P}^t_{\mathsf{id}_n}}[\chi]-\mathsf{Var}_{\pi_n}[\chi]\big|\leq \frac{2}{3^{n-1}},
\end{align*}
and thus for the function 
\begin{align*}
    R(t)=3^{n-1}\Big(1-\frac{6}{|V_n|+1}\Big)^{2t}
\end{align*}
it holds
\begin{align*}
    \|\mathsf{P}^t_{\mathsf{id}_n}-\pi_n\|_{\mathsf{TV}}\geq 1-\frac{4}{4+R(t)}.
\end{align*}
In order to lower bound $R(t)$, choose $c>0$ which will be specified later, and fix $t\in\N$ such that $0<t<\frac{|V_n|}{12}\log|V_n|-c|V_n|$. This is possible for large enough $n$ because $|V_n|\log|V_n|$ grows faster than $|V_n|$. Then
\begin{align*}
    t\log\Big(1-\frac{6}{|V_n|+1}\Big)\geq\Big(\frac{|V_n|}{12}\log|V_n|-c|V_n|\Big)\log\Big(1-\frac{6}{|V_n|+1}\Big).
\end{align*}
By considering the function $f(x)=\log(1-x)+x+x^2$  we see that $\log(1-x)\geq-x-x^2$ for $x\in[0,\pi^2/16]$. This can be seen by noting that $f(0)=0$ and $f(\pi^2/16)>0$ as well as by considering the derivative of $f$, which implies that $f$ increases on $[0,1/2]$ and decreases on $[1/2,\pi^2/16]$. We then infer that
\begin{align*}
    \log\Big(1-\frac{6}{|V_n|+1}\Big)\geq -\frac{6}{|V_n|}-\frac{36}{|V_n|^2}
\end{align*}
for $n$ large enough, which yields
\begin{align*}
     t\log\Bigg(1-\frac{6}{|V_n|+1}\Bigg)\geq -\frac{\log|V_n|}{2}+6c-3\frac{\log|V_n|-12c}{|V_n|},
\end{align*}
and thus $    R(t)\geq \alpha e^{12c}$,
where $\alpha\leq 3^{n-1}e^{-6\frac{\log|V_n|}{|V_n|}+72\frac{c}{|V_n|}-\log|V_n|}$. The upper bound for $\alpha$ converges to some number strictly greater than $0$, hence we can find an $\alpha$ that fulfills this inequality for all $n$. Choosing $\alpha=2\cdot10^{-6}$ suffices, and this gives
\begin{align*}
   \|\mathsf{P}^t_{\mathsf{id}_n}-\pi_n\|_{\mathsf{TV}}\geq 1-\frac{4\cdot10^6}{4\cdot10^6+2e^{12c}}
\end{align*}
for $0\leq t\frac{|V_n|}{12}\log|V_n|-c|V_n|$. Solving
$     1-\frac{4\cdot10^6}{4\cdot10^6+2e^{12c}}\geq 1/4$,
we  get for  the choice of $c= \log(10^6)/12$  that 
\begin{align*}
    t_{\mathsf{mix}}\geq \frac{|V_n|}{12}\log|V_n|-c|V_n|
\end{align*}
which completes the lower bound and together with the upper bound and with the definition of $V_n$ proves the claim.
\end{proof}
\bibliography{references}{}

\newcommand{\etalchar}[1]{$^{#1}$}
\begin{thebibliography}{KUZMS96}

\bibitem[BTW87]{bak-tang-wiesenfeld-87}
P.~Bak, C.~Tang, and K.~Wiesenfeld.
\newblock {Self-Organized Criticality. An explanation of 1/f noise}.
\newblock {\em Physical Review Letters}, 59:381--384, 1987.

\bibitem[CHSHT20]{idla-gasket}
Joe~P. Chen, Wilfried Huss, Ecaterina Sava-Huss, and Alexander Teplyaev.
\newblock Internal {DLA} on {S}ierpinski gasket graphs.
\newblock In {\em Analysis and geometry on graphs and manifolds}, volume 461 of
  {\em London Math. Soc. Lecture Note Ser.}, pages 126--155. Cambridge Univ.
  Press, Cambridge, 2020.

\bibitem[CKF20]{chen-sandpile-2020}
Joe~P. Chen and Jonah Kudler-Flam.
\newblock Laplacian growth and sandpiles on the {S}ierpi\'{n}ski gasket: limit
  shape universality and exact solutions.
\newblock {\em Ann. Inst. Henri Poincar\'{e} D}, 7(4):585--664, 2020.

\bibitem[Dha90]{dhar-1990}
Deepak Dhar.
\newblock Self-organized critical state of sandpile automaton models.
\newblock {\em Phys. Rev. Lett.}, 64:1613--1616, Apr 1990.

\bibitem[DPV01]{waves-sandpile-daerden}
F.~Daerden, V.~B. Priezzhev, and C.~Vanderzande.
\newblock Waves in the sandpile model on fractal lattices.
\newblock {\em Phys. A}, 292(1-4):43--54, 2001.

\bibitem[DV98]{crit-exp-daerden-1998}
Frank Daerden and Carlo Vanderzande.
\newblock {Sandpiles on a Sierpinski gasket}.
\newblock {\em Physica A: Statistical Mechanics and its Applications},
  256(3):533--546, 1998.

\bibitem[FHS{\etalchar{+}}16]{stricharz-sandpile}
Samantha Fairchild, Ilse Haim, Rafael~G. Setra, Robert~S. Strichartz, and
  Travis Westura.
\newblock The abelian sandpile model on fractal graphs, 2016.

\bibitem[HLM{\etalchar{+}}08]{levine-survey}
Alexander~E. Holroyd, Lionel Levine, Karola M\'{e}sz\'{a}ros, Yuval Peres,
  James Propp, and David~B. Wilson.
\newblock Chip-firing and rotor-routing on directed graphs.
\newblock In {\em In and out of equilibrium. 2}, volume~60 of {\em Progr.
  Probab.}, pages 331--364. Birkh\"{a}user, Basel, 2008.

\bibitem[HSH19]{div-sandpile-gasket}
Wilfried Huss and Ecaterina Sava-Huss.
\newblock Divisible sandpile on {S}ierpinski gasket graphs.
\newblock {\em Fractals}, 27(3):1950032, 14, 2019.

\bibitem[J\'18]{sandpile-jarai}
Antal~A. J\'{a}rai.
\newblock Sandpile models.
\newblock {\em Probab. Surv.}, 15:243--306, 2018.

\bibitem[JLP19]{abel-sand-mix-pike-levine-jerison}
Daniel~C. Jerison, Lionel Levine, and John Pike.
\newblock Mixing time and eigenvalues of the abelian sandpile {M}arkov chain.
\newblock {\em Trans. Amer. Math. Soc.}, 372(12):8307--8345, 2019.

\bibitem[Kli19]{chip-firing-book}
Caroline~J. Klivans.
\newblock {\em The mathematics of chip-firing}.
\newblock Discrete Mathematics and its Applications (Boca Raton). CRC Press,
  Boca Raton, FL, 2019.

\bibitem[KUZMS96]{KutnjakUrbanc1996SandpileMO}
Kutnjak-Urbanc, Zapperi, Milosevic, and Stanley.
\newblock Sandpile model on the sierpinski gasket fractal.
\newblock {\em Physical review. E, Statistical physics, plasmas, fluids, and
  related interdisciplinary topics}, 54 1:272--277, 1996.

\bibitem[Lev09]{Levine-sandpile-tree}
Lionel Levine.
\newblock The sandpile group of a tree.
\newblock {\em European Journal of Combinatorics}, 30(4):1026 -- 1035, 2009.

\bibitem[LP17]{peres-mixing-book}
David~A. Levin and Yuval Peres.
\newblock {\em Markov chains and mixing times}.
\newblock American Mathematical Society, Providence, RI, 2017.
\newblock Second edition of [ MR2466937], With contributions by Elizabeth L.
  Wilmer, With a chapter on ``Coupling from the past'' by James G. Propp and
  David B. Wilson.

\bibitem[SC04]{saloff-coste-survey}
Laurent Saloff-Coste.
\newblock Random walks on finite groups.
\newblock In {\em Probability on discrete structures}, volume 110 of {\em
  Encyclopaedia Math. Sci.}, pages 263--346. Springer, Berlin, 2004.

\bibitem[Tou07]{toumpakari-sandpile}
Evelin Toumpakari.
\newblock On the sandpile group of regular trees.
\newblock {\em European J. Combin.}, 28(3):822--842, 2007.

\bibitem[TW11]{teufl-wagner-2011}
Elmar Teufl and Stephan Wagner.
\newblock The number of spanning trees in self-similar graphs.
\newblock {\em Ann. Comb.}, 15(2):355--380, 2011.

\end{thebibliography}
\bibliographystyle{alpha}

\textsc{Robin Kaiser}, Department of Mathematics, University of Innsbruck, Austria.\\
\texttt{Robin.Kaiser@uibk.ac.at}

\textsc{Ecaterina Sava-Huss}, Department of Mathematics, University of Innsbruck, Austria.\\
\texttt{Ecaterina.Sava-Huss@uibk.ac.at}

\textsc{Yuwen Wang}, Department of Mathematics, University of Innsbruck, Austria.\\
\texttt{Yuwen.Wang@uibk.ac.at}
\end{document}